 \newtheorem{theorem}{Theorem}[section]
 \newtheorem{corollary}[theorem]{Corollary}
 \newtheorem{lemma}[theorem]{Lemma}
\theoremstyle{definition}
\newtheorem{rem}[theorem]{Remark}
\newcommand{\R}{\mathbb R}
\newcommand{\C}{\mathbb C}
\newcommand{\LH}{{\mathcal L}(H) }
\newcommand{\Rat}{{\mathcal R}}
\renewcommand{\Re}{\mathop {\rm Re}\nolimits}
\renewcommand{\Im}{\mathop {\rm Im}\nolimits}
\title[Intersections of disks]{Intersections of several disks of the Riemann sphere as $K$-spectral sets}
\author{Catalin Badea}
\address{Laboratoire Paul Painlev\' e, B\^at. M2, UMR CNRS no. 8524,
Universit\'e de Lille 1, 59655 Villeneuve d'Ascq Cedex, France}
\email{Catalin.Badea@math.univ-lille1.fr}
\thanks{The first author was partially supported by ANR-Projet Blanc DYNOP. The third author was partially supported by GDR 2753 CNRS}
\author{Bernhard Beckermann}
\address{Laboratoire Paul Painlev\' e, B\^at. M2, UMR CNRS no. 8524,
Universit\'e de Lille 1, 59655 Villeneuve d'Ascq Cedex, France}
\email{Bernhard.Beckermann@math.univ-lille1.fr}
\author{Michel Crouzeix}
\address{Institut de Recherche Math\'ematique de Rennes, UMR no. 6625,
Universit\'e de Rennes 1, Campus de Beaulieu, 35042
RENNES Cedex, France}
\email{michel.crouzeix@univ-rennes1.fr}
\subjclass{Primary: 47A25; Secondary: 47A20, 15A60}
 \keywords{Spectral set, complete $K$-spectral set, intersection of spectral sets, normal dilation}
\date{\today}
\dedicatory{Pour Philippe Ciarlet, \`a l'occasion de son soixante-dixi\`eme anniversaire}
\begin{document}

\begin{abstract}

We prove that if $n$ closed disks $D_1$, $D_2$,\dots, $D_n$, of
the Riemann sphere are spectral sets for a bounded linear operator
$A$ on a Hilbert space, then their intersection $D_1\cap
D_2\dots\cap D_n$ is a complete $K$-spectral set for $A$, with
$K\leq n+n(n\!-\!1)/\sqrt3$. When $n=2$ and the intersection
$X_1\cap X_2$ is an annulus, this result gives a positive answer
to a question of A.L.~Shields (1974).
\end{abstract}

\maketitle

\section{Introduction and the statement of the main results.}
Let $X$ be a closed set in the complex plane and let $\Rat (X)$
denote the algebra of bounded rational functions on $X$, viewed as
a subalgebra of $C(\partial X)$ with the supremum norm $$\|f\|_X =
\sup \{ |f(x)| : x \in X\} = \sup \{ |f(x)| : x \in
\partial X\}. $$ Here $\partial X$ denotes the boundary of the set
$X$.

\medskip
\noindent {\bf Spectral and complete spectral sets.}
 Let $A \in \mathcal{L}(H)$
 be a bounded linear operator acting on a complex Hilbert space $H$.
 For a fixed constant $K > 0$, the set $X$ is said to be a
 $K$-\emph{spectral} set for $A$ if the spectrum $\sigma(A)$ of $A$
 is included in $X$ and the inequality $\|f(A)\| \le K\|f\|_X$ holds
 for every $f \in \Rat (X)$. The poles of a rational
 function $f=p/q \in \Rat (X)$ are outside of $X$, and the operator $f(A)$ is
 naturally defined as $f(A) = p(A)q(A)^{-1}$ or, equivalently,
 by the Riesz holomorphic functional
 calculus. The set $X$ is a \emph{spectral} set
 for $A$ if it is a $K$-spectral set with $K=1$. Thus $X$ is spectral
 for $A$ if and only if $\|\rho_A\| \le 1$,
 where $\rho_A : \Rat (X) \mapsto \mathcal{L}(H)$ is the homomorphism
 given by $\rho_A(f) = f(A)$.

 We denote by $M_n(\Rat (X))$ the algebra of $n$ by $n$ matrices with entries from $\Rat (X)$. If we let the $n$ by $n$ matrices have the operator norm that they inherit as linear transformations on the $n$-dimensional Hilbert space $\C^{n}$, then we can endow $M_n(\Rat (X))$ with the norm
$$\|\left(f_{ij}\right)\|_X = \sup \{ \|\left(f_{ij}(x)\right)\| :
x \in X\} = \sup \{ \|\left(f_{ij}(x)\right)\| : x \in \partial
X\}. $$ In a similar fashion we endow $M_n(\mathcal{L}(H))$ with
the norm it inherits by regarding an element $(A_{ij})$ in
$M_n(\mathcal{L}(H))$ as an operator acting on the direct sum of
$n$ copies of $H$. For a fixed constant $K > 0$, the set $X$ is
said to be a \emph{complete} $K$-\emph{spectral} set for $A$ if
$\sigma(A) \subset X$ and the inequality $\|(f_{ij}(A))\| \le
K\|(f_{ij})\|_X$ holds for every matrix $(f_{ij}) \in M_n(\Rat
(X))$ and every $n$. In terms of the complete bounded norm
(\cite{paul}) of the homomorphism $\rho_A$, this means that
$\|\rho_A\|_{cb} \le K$. A \emph{complete spectral} set is a
complete $K$-spectral set with $K=1$. Complete $K$-spectral sets are important in several problems of Operator Theory (see \cite{paul}).

Spectral sets were introduced and studied by J. von Neumann
\cite{Neu} in 1951. In the same paper von Neumann proved that a
closed disk $\{z \in \C : |z-\alpha| \le r\}$ is a spectral set
for $A$ if and only if $\|A -\alpha I \| \le r$. Also,
the closed set $\{z \in \C : |z-\alpha| \ge r\}$ is spectral for
$A \in \mathcal{L}(H)$ if and only if $\|(A -\alpha I)^{-1} \| \le
r^{-1}$, and the half-plane $\{ \Re(z)\geq 0 \}$ is a spectral set
if and only if $\Re\langle A v,v\rangle\geq0$ for all $v\in H$.
Therefore for any closed disk $D$ of the Riemann sphere
(interior/exterior of a disk or a half-plane) it is easy to check
whether $D$ is a spectral set.

We refer to \cite{RiNa} for a treatment of the
von Neumann's theory of spectral sets and to \cite{paulsen} and
the book \cite{paul} for modern surveys of known properties
of $K$-spectral and complete $K$-spectral sets.

\medskip
\noindent {\bf The problem.}
Let $X$ be the intersection of $n$ disks of the Riemann sphere,
each of them being a spectral set for a given operator $A \in \LH$.
In the present paper we will be
concerned with the question whether $X$ itself is a (complete)
$K$-spectral set for $A$.

The intersection of two spectral sets is not necessarily a
spectral set; counterexamples for the annulus are presented in \cite{williams, misra, paulsen}. However, the same question for $K$-spectral
sets remains open. The counterexamples for spectral sets show that
the same constant cannot be used for the intersection.

Some cases of the $K$-spectral set problem have been solved. If
two $K$-spectral sets have disjoint boundaries, then by a result
of Douglas and Paulsen \cite{dopa} the intersection is a
$K'$-spectral set for some $K'$. The case when the boundaries meet
was considered by Stampfli \cite{stampfliI,stampfliII} and Lewis
\cite{lewis}. In particular, it is proved in \cite{lewis} that the
intersection of a (complete) $K$-spectral set for the bounded
linear operator $A$ with the closure of any open set containing
the spectrum of $A$ is a (complete) $K'$-spectral set for $A$.

\medskip

\noindent {\bf The main result.}
In this paper we will show the following result, which gives a positive answer to a question raised by Michael A. Dritschel (personal communication).
\begin{theorem}\label{thm0} Let $A\in \mathcal L(H)$, and consider the
intersection $X=D_1\cap D_2\cap\dots\cap D_n$ of $n$ disks of the
Riemann sphere $\overline \C$, each of them being spectral for
$A$. Then $X$ is a complete $K$-spectral set for $A$, with a
constant $K\leq n+n(n\!-\!1)/\sqrt3$.
\end{theorem}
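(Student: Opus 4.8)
The plan is to exploit the fact that each individual disk is not merely spectral but \emph{completely} spectral, to produce for each disk a normal dilation, and then to glue these dilations together through a single Cauchy integral of $f$ along $\partial X$.

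First I would record the reductions. It is classical (von Neumann's inequality combined with the Sz.-Nagy unitary dilation theorem, transported by a M\"obius map of $\overline{\C}$) that a disk of the Riemann sphere which is a spectral set for $A$ is automatically a \emph{complete} spectral set. Hence, by Arveson's dilation theorem, for each $i$ there is a Hilbert space $H_i\supseteq H$ and a normal operator $N_i$ on $H_i$, with spectral measure $E_i$ supported on $\partial D_i$, such that $g(A)=P_H\,g(N_i)|_H$ for all $g\in\Rat(D_i)$; equivalently $(\zeta I-A)^{-1}=P_H(\zeta I-N_i)^{-1}|_H$ for every $\zeta\notin D_i$. By a routine approximation I may also assume $f$ (and, in the completely bounded setting, each entry of the matrix $(f_{kl})$) is holomorphic in a neighbourhood of $X$ and that $\sigma(A)$ lies in the interior of $X$, so that the holomorphic functional calculus is given by a contour integral.

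The heart of the argument is the following representation. Write $\partial X=\bigcup_i\Gamma_i$ with $\Gamma_i=\partial X\cap\partial D_i$, and push each arc slightly to the outside of $D_i$ to a nearby arc $\Gamma_i^{\varepsilon}$. Since on $\Gamma_i^{\varepsilon}$ we have $(\zeta I-A)^{-1}=P_H(\zeta I-N_i)^{-1}|_H$, Cauchy's formula, the representation of the resolvent of $N_i$ through $E_i$, and Fubini give
\[
 f(A)=\frac{1}{2\pi i}\int_{\partial X}f(\zeta)(\zeta I-A)^{-1}\,d\zeta
 =\sum_{i}P_H\Big(\int_{\partial D_i}\phi_i\,dE_i\Big)\Big|_H,
 \qquad
 \phi_i(\lambda)=\frac{1}{2\pi i}\int_{\Gamma_i}\frac{f(\zeta)}{\zeta-\lambda}\,d\zeta .
\]
Because $E_i$ is projection-valued, $\big\|\int_{\partial D_i}\phi_i\,dE_i\big\|\le\sup_{\lambda\in\partial D_i}|\phi_i(\lambda)|=\|\phi_i\|_{\partial D_i}$, and the compression $P_H(\,\cdot\,)|_H$ does not increase the norm, so $\|f(A)\|\le\sum_i\|\phi_i\|_{\partial D_i}$. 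The identical computation with a matrix $(f_{kl})$ in place of $f$ goes through verbatim, which is exactly what yields \emph{complete} $K$-spectrality with the same constant. To bound each $\|\phi_i\|_{\partial D_i}$ I would use the closed-contour identity: for $\lambda\in\partial D_i$, $\phi_i(\lambda)+\sum_{j\ne i}\psi_{ij}(\lambda)$ equals $\frac{1}{2\pi i}\int_{\partial X}\frac{f(\zeta)}{\zeta-\lambda}\,d\zeta$, where $\psi_{ij}(\lambda)=\frac{1}{2\pi i}\int_{\Gamma_j}\frac{f(\zeta)}{\zeta-\lambda}\,d\zeta$. A Sokhotski--Plemelj boundary-value analysis shows that the diagonal term reproduces $f(\lambda)$ on $\Gamma_i$ (so $|\,\cdot\,|\le\|f\|_X$ there) and vanishes on $\partial D_i\setminus\Gamma_i$, leaving $\phi_i$ controlled by $\|f\|_X$ plus the $n-1$ cross terms $\psi_{ij}$.

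The main obstacle is the sharp estimate of these cross terms, namely $|\psi_{ij}(\lambda)|\le\frac{1}{\sqrt3}\,\|f\|_X$ uniformly for $\lambda\in\partial D_i$ and $j\ne i$; granting it, $|\phi_i(\lambda)|\le\big(1+\tfrac{n-1}{\sqrt3}\big)\|f\|_X$ on all of $\partial D_i$, and summing over $i$ gives $\|f(A)\|\le\big(n+\tfrac{n(n-1)}{\sqrt3}\big)\|f\|_X$, as claimed. The delicate point is that $\lambda$ may approach the corner where $\partial D_i$ meets $\partial D_j$, a point lying on $\Gamma_j$; there $\frac{1}{2\pi}\int_{\Gamma_j}\frac{|d\zeta|}{|\zeta-\lambda|}$ \emph{diverges}, so one cannot simply bound by the modulus of the kernel. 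The estimate must instead use the cancellation carried by the complex phase of the Cauchy kernel $(\zeta-\lambda)^{-1}$. I expect the bound to reduce, after normalising the corner geometry of two circles (or a circle and a line) meeting at a point, to an explicit one-variable extremal problem whose value is $\frac{1}{2\pi}\int_{\R}\frac{dt}{t^{2}+t+1}=\frac{1}{\sqrt3}$, which is precisely the origin of the $\sqrt3$ in the theorem; checking that this configuration is genuinely the worst case, and handling the interaction of several corners when $n\ge3$, is where the real work lies.
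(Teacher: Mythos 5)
Your overall architecture is sound up to and including the inequality $\|f(A)\|\le\sum_i\sup_{\partial D_i}|\phi_i|$ (and its matricial version), but the step you defer as ``the main obstacle'' --- the cross-term bound $|\psi_{ij}(\lambda)|\le\tfrac{1}{\sqrt3}\|f\|_X$ for $\lambda\in\partial D_i$ --- is not merely hard: it is false, and no uniform constant can replace $1/\sqrt3$. Test the scheme on the annulus $X(R)$ with $D_1=\{|z|\le R\}$, $D_2=\{|z|\ge R^{-1}\}$. A residue computation shows that for $\lambda$ on the outer circle your $\phi_1(\lambda)$ equals $f_+(\lambda)=\sum_{m\ge0}a_m\lambda^m$ and $\psi_{12}(\lambda)$ equals $f_-(\lambda)=\sum_{m<0}a_m\lambda^m$, so your method bounds $\|f(A)\|$ by $\|f_+\|_{D_1}+\|f_-\|_{D_2}$: it reduces to Shields's Laurent-splitting argument, whose constant necessarily blows up as $R\to1$. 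Concretely, let $F_N(z)=-i\sum_{m=1}^{N}(z^m-z^{-m})/m$; on $\mathbb T$ this is $2\sum_{m=1}^N\sin(m\theta)/m$, bounded by an absolute constant $C_0$, so for each $N$ one can pick $R_N>1$ with $\|F_N\|_{X(R_N)}\le C_0+1$, while $|(F_N)_+(R_N)|=\sum_{m=1}^N R_N^m/m\ge\log N$. Hence $\sup_{R,f}\|\phi_1\|_{\partial D_1}/\|f\|_{X(R)}=\infty$, and since $\phi_1=f-\psi_{12}$ on $\Gamma_1$, the same holds for $\psi_{12}$; this is exactly the unboundedness of the Riesz projection on $C(\mathbb T)$ resurfacing as the annulus degenerates. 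So the passage from the operator integral to the scalar sup norms $\|\phi_i\|_{\partial D_i}$, though a correct inequality, already discards the cancellation that makes the theorem true; no refinement of the corner analysis can recover a uniform constant afterwards.

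The paper's proof is organized precisely to avoid any scalar splitting of $f$. It splits the Cauchy \emph{kernel} on each arc $X\cap\partial D_j$ into a positive Poisson part $\mu(\sigma,A,D_j)\,ds$ (whose positivity encodes the spectral-set hypothesis and gives the contribution $n$, Corollary~\ref{decadix}) and a residual part $\nu(\sigma,A,D_j)\,d\sigma$ which is analytic in $X$; the residual integrals are then deformed, not to $\partial D_i$, but to the median lines $C_{jk}$ (equidistant from $\partial D_j$ and $\partial D_k$ in the Carath\'eodory metric, organized by a Voronoi tesselation when $n\ge3$), where the paired kernel $\nu(\cdot,A,D_j)-\nu(\cdot,A,D_k)$ takes the form $c\,M(\sigma)^{-1}$ with $\Re M(\sigma)\ge N(\sigma)\ge\alpha>0$. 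Lemma~\ref{pm} then converts this accretivity into the bound $2/\sqrt3$ per pair of disks: the cancellation lives in $\Re M$, i.e., it is operator-theoretic, whereas your $\tfrac1{2\pi}\int_{\R}dt/(t^2+t+1)=1/\sqrt3$ is a numerical coincidence (the paper's $2/\sqrt3$ arises as $\tfrac2\pi\int_{\R}dx/((x-\lambda)^2+3)$ on the median line of a strip). Your dilation-theoretic first step is fine as far as it goes --- it is an equivalent packaging of the positivity $\mu(\sigma,A,D)\ge0$ used in the paper --- but to complete the argument you would have to integrate $f$ against the operator-valued difference of residual kernels along median lines rather than against the spectral measures $E_i$ along $\partial D_i$.
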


Theorem~\ref{thm0} extends previously known results concerning the
intersection of two or more disks in $\C$ to not necessarily
convex or even connected $X$. Note that the case of finitely
connected compact sets has been studied in \cite{dopa, paul},
however, without a uniform control on the constant $K$.

If in Theorem~\ref{thm0} we add the requirement that the disks
$D_j$ and thus $X$ are convex, then
$X$ is a complete $11.08$-spectral set for $A$. Indeed, the fact
that $D_j$ is a spectral set for $A$ implies that the numerical
range $W(A)=\{\langle Ax, x\rangle : \|x\| = 1\}$ is included in
$D_j$, $j=1,...,n$, and, according to \cite{crzx}, the closure of
the numerical range $W(A)$ is a complete $11.08$-spectral set for
$A$.

 Let us have a closer look at the case
$n=2$ of the intersection of two closed disks of the Riemann
sphere. From Theorem~\ref{thm0} we may conclude that the intersection of two disks of the Riemann sphere is a $K$-spectral set, with $K \leq
2+2/\sqrt 3$. This includes the special case of a sector/strip
obtained by the intersection of two half-planes and discussed in
\cite{crde}, and the lens-shaped intersection of two disks
considered already in \cite{becr}; see also \cite{bcd,becr2}. The case of the annulus is new and it permits to answer a question of A. Shields \cite[Question 7]{shields} as described in the next paragraph.

\medskip

\noindent {\bf Shields' question for the annulus.} Let $R>1$. Given an invertible operator $A$ with $\|A\| \le R$ and
$\|A^{-1}\| \le R$, Shields proved in \cite{shields} that the annulus
$X = X(R)= \{z \in \C : R^{-1} \le |z| \le R\}$ is a $K$-spectral set for $A$ with $K = 2+
\sqrt{\left(R^2\!+\!1\right)/\left(R^2\!-\!1\right)}$. This bound is large if $R$ is close to $1$. In this context, Shield raises the question of finding the smallest
constant $K$ (as a function of $R$) such that the above annulus
is $K$-spectral. In particular \cite[Question 7]{shields}, he
asked whether this optimal constant $K$ remains bounded.
With regard to his second question, we are able to give a
positive answer.

\begin{theorem}\label{thm1}
For $R > 1$, consider the annulus $X = X(R)= \{z \in
\C : R^{-1} \le |z| \le R\}$, and denote by $K(R)$ (and
$K_{cb}(R)$, respectively), the smallest constant $C$ such that
$X$ is a $C$-spectral set (and a complete $C$-spectral set,
respectively) for any invertible $A \in \mathcal{L}(H)$ verifying
$\|A\| \le R$ and $\|A^{-1}\| \le R$. Then
\begin{eqnarray*}
\frac43  <  K(R) \le K_{cb}(R) \le
 2 +  \frac{R+1}{\sqrt{R^2+R+1}}\leq 2 + \frac{2}{\sqrt{3}}.
\end{eqnarray*}
\end{theorem}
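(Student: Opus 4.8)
I begin with the two easy links in the chain. The inequality $K(R)\le K_{cb}(R)$ is immediate, since a scalar rational function is a $1\times 1$ matrix. The last inequality $2+\frac{R+1}{\sqrt{R^2+R+1}}\le 2+\frac{2}{\sqrt3}$ reduces, after squaring, to $3(R+1)^2\le 4(R^2+R+1)$, i.e. $(R-1)^2\ge 0$, with equality only at $R=1$. For the substantive upper bound I would use von Neumann's theorem as recalled in the introduction: $\|A\|\le R$ makes the closed disk $\{|z|\le R\}$ a complete spectral set for $A$, and $\|A^{-1}\|\le R$ makes the closed exterior disk $\{|z|\ge R^{-1}\}$ a complete spectral set. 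Thus $X(R)=\{|z|\le R\}\cap\{|z|\ge R^{-1}\}$ is the intersection of two disks of the Riemann sphere, each completely spectral for $A$, and Theorem~\ref{thm0} with $n=2$ already gives $K_{cb}(R)\le 2+2/\sqrt3$, matching the final term of the displayed chain.

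To obtain the sharper, $R$-dependent constant $2+\frac{R+1}{\sqrt{R^2+R+1}}$, I would revisit the $n=2$ argument behind Theorem~\ref{thm0} and recompute, for this concrete configuration, the geometric quantity that produces the generic per-pair factor $2/\sqrt3$. A natural route is to split $f\in\Rat(X)$ into its two components holomorphic on the two disks (the positive and negative Laurent parts, the latter vanishing at $\infty$), apply the spectral estimate to each piece separately, and then bound the norm of the associated splitting projection. Running the two concentric circles $|z|=R$ and $|z|=R^{-1}$ explicitly through this estimate should replace the worst-case value $2/\sqrt3$ by the exact annular value $\frac{R+1}{\sqrt{R^2+R+1}}$, which indeed equals $2/\sqrt3$ at $R=1$ and tends to $1$ as $R\to\infty$. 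I expect the delicate point here to be verifying that the annular data yields \emph{exactly} this constant rather than merely a bound by $2/\sqrt3$.

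For the lower bound I must produce, for each $R>1$, an invertible $A$ with $\|A\|\le R$, $\|A^{-1}\|\le R$, and an $f\in\Rat(X)$ with $\|f(A)\|>\tfrac43\|f\|_X$. A transparent test is $A=\begin{pmatrix}0&R\\ R^{-1}&0\end{pmatrix}$, for which $\|A\|=\|A^{-1}\|=R$, $\sigma(A)=\{1,-1\}\subset X$, and $A^2=I$, so that $f(A)=pI+qA$ with $p=\tfrac12\big(f(1)+f(-1)\big)$ and $q=\tfrac12\big(f(1)-f(-1)\big)$; the non-normality of $A$ is what inflates $\|f(A)\|$. Taking $f(z)=\frac{R}{R^2+1}\big(z+z^{-1}\big)$ gives $\|f\|_X=1$, $p=0$, $q=\frac{2R}{R^2+1}$, hence $\|f(A)\|=qR=\frac{2R^2}{R^2+1}$, which exceeds $\tfrac43$ precisely when $R>\sqrt2$.

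The regime $1<R\le\sqrt2$ of a thin annulus is the genuine obstacle. There any nontrivial coupling in a $2\times2$ block forces the eigenvalues strictly inside $X$, where the maximum principle pins the relevant values of $f$ strictly below $\|f\|_X$; as $R\to 1^+$ every elementary example degrades to ratio tending to $1$, although near-normality does \emph{not} actually hold, so the bound $\tfrac43$ must still be reachable. The plan is to replace the single $2\times2$ block by a family of operators that stay uniformly far from normal while respecting $\|A\|\le R$ and $\|A^{-1}\|\le R$ — for instance operators built from the extremal (inner/Ahlfors) functions of the annulus, whose unimodularity on \emph{both} boundary circles encodes the doubly connected topology that obstructs spectrality — and to test against the corresponding extremal $f$, showing $\|f(A)\|/\|f\|_X>\tfrac43$ uniformly in $R$. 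This uniform-in-$R$ estimate for $R$ near $1$ is the hardest part of the whole statement; granting it, the strict inequality $\tfrac43<K(R)$ follows, with $2+\frac{R+1}{\sqrt{R^2+R+1}}$ serving as the matching upper envelope.
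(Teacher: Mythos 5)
Your handling of the two easy links is fine, and your $2\times2$ example is correct as far as it goes, but both substantive halves of your argument have real gaps. For the upper bound, the route you propose --- splitting $f$ into its nonnegative and negative Laurent parts, estimating each part on its own disk, and bounding the splitting projection --- is essentially Shields' original method (and the Paulsen--Singh refinement recorded in Remark~\ref{rem_bound2}), and it cannot deliver a constant that stays bounded as $R\to 1^+$: the norm of the Laurent splitting on the annulus degenerates in that limit, which is exactly why Shields' constant $2+\sqrt{(R^2\!+\!1)/(R^2\!-\!1)}$ and the bound $2+\sum_{n\geq1}4/(1\!+\!R^{2n})$ of Remark~\ref{rem_bound2} both diverge as $R\to1$. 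Since the thin-annulus regime is the whole point of the theorem, this is a wrong mechanism rather than a computation to be tightened. (Note also that in the paper's logic one cannot invoke Theorem~\ref{thm0} for $n=2$ here, since the annulus case of Theorem~\ref{thm0} is \emph{proved via} Theorem~\ref{thm1}.) The paper instead writes $f(A)=g_p(f)+g_r(f)$ using the Poisson/residual decomposition of the Cauchy kernel, bounds $g_p$ by $2$ via positivity (Corollary~\ref{decadix}), deforms both residual integrals onto the median circle $|z|=1$, and estimates the resulting kernel $M(\theta,A^*)^{-1}$ with $M(\theta,A^*)=R^2+R^{-2}-e^{i\theta}A^*-e^{-i\theta}(A^{-1})^*$ by writing $A^*=UG$, $R^{-1}\le G\le R$, and applying Lemma~\ref{pm}; the explicit Poisson-type integral that emerges is exactly $(R+1)/\sqrt{R^2+R+1}$.

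For the lower bound, your test gives $\|f(A)\|=2R^2/(R^2\!+\!1)$, which beats $4/3$ only for $R>\sqrt2$, and what you offer for $1<R\le\sqrt2$ is a program, not a proof; the uniform-in-$R$ estimate you defer is precisely the content of the claim. The paper closes this with the Jordan block $A(t)=\bigl(\begin{smallmatrix}1&t\\0&1\end{smallmatrix}\bigr)$, $t=R-R^{-1}$, which satisfies $\|A(t)\|=\|A(t)^{-1}\|=R$ and $\|f(A(t))\|\ge t\,|f'(1)|$, so that $K(R)\ge(R-R^{-1})\sup\{|f'(1)|/\|f\|_{X(R)}\}$; this supremum is the infinitesimal Carath\'eodory metric of the annulus at the point $1$, computed explicitly by Simha as an infinite product, and already the first truncation $\gamma_1(R)$ of the resulting product is increasing in $R$ with $\gamma_1(1)=4/3$, giving the strict inequality for every $R>1$. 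Your instinct that the annulus extremal functions must enter is right, but the missing concrete ingredients are the non-diagonalizable (Jordan) extremal operator and Simha's formula; without them your argument does not establish $K(R)>4/3$ near $R=1$.
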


This statement has already been established in the unpublished
manuscript \cite{bbc}, where one finds also a preliminary version
of Theorem~\ref{thm0} for $n=2$.
 One consequence of Theorem~\ref{thm1}
is that the upper bound for $K$ of Theorem~\ref{thm0} is not sharp;
furthermore, we expect this bound to be pessimistic for large
$n$.

\medskip

\noindent {\bf Normal $\partial X$-dilations.} Based on results due to W.~Arveson and V.~Paulsen (see \cite[Corollary 7.8, Theorem 9.1]{paul}), it is now a standard fact that if $X$ is a $K$-spectral set for $A$, then there is a normal $\partial X$-dilation for an operator $L^{-1}AL$ similar to $A$. Moreover, the best possible constant $K$ is equal to the infimum of all possible similarity constants (condition numbers) $\|L^{-1}\|\cdot\|L\|$.
We say that $B \in \LH$ has a \emph{normal} $\partial X$-\emph{dilation} if there exists a Hilbert space $\mathcal H$
containing $H$ and a normal operator $N$ on $\mathcal H$ with $\sigma(N)\subset \partial X$ so that
$$ f(B) = P_Hf(N)\mid_H$$ for every rational function $f$ with
poles off $X$. Here $P_H$ is the orthogonal projection of $\mathcal{H}$ onto $H$.

We obtain the following consequence:
\begin{corollary}\label{cor1}
Let $A\in \mathcal L(H)$, and consider the
intersection $X=D_1\cap D_2\cap\dots\cap D_n$ of $n$ disks of the
Riemann sphere $\overline \C$, each of them being spectral for
$A$. Then there exists an invertible
operator $L \in \mathcal{L}(H)$ with $\|L\|\cdot\|L^{-1}\|\leq n+n(n\!-\!1)/\sqrt3$,
such that $L^{-1}AL$ has a normal $\partial X$-dilation.
\end{corollary}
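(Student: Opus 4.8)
The plan is to deduce the corollary directly from Theorem~\ref{thm0} together with the Arveson--Paulsen machinery summarized in the paragraph preceding the statement; no new estimate is required. The content of Theorem~\ref{thm0} is precisely that the unital homomorphism $\rho_A\colon \Rat(X)\to\LH$, $\rho_A(f)=f(A)$, is completely bounded with $\|\rho_A\|_{cb}\le K$, where $K\le n+n(n\!-\!1)/\sqrt3$. The whole task is then to convert this complete bound into a similarity carrying $A$ onto an operator possessing a normal $\partial X$-dilation.

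First I would invoke Paulsen's similarity theorem (as recorded in \cite[Theorem 9.1]{paul}). Since $\rho_A$ is a completely bounded unital homomorphism with $\|\rho_A\|_{cb}\le K$, there exists an invertible $L\in\LH$ with $\|L\|\cdot\|L^{-1}\|\le K$ such that the conjugated homomorphism $f\mapsto L^{-1}\rho_A(f)L$ is completely contractive, i.e. has $cb$-norm at most $1$. Because $\rho_A$ is an algebra homomorphism, conjugation commutes with the rational functional calculus: writing $f=p/q$ one checks $L^{-1}f(A)L=p(L^{-1}AL)\,q(L^{-1}AL)^{-1}=f(L^{-1}AL)$, and since similarity preserves the spectrum, $\sigma(L^{-1}AL)=\sigma(A)\subset X$, so $f(L^{-1}AL)$ is well defined and the poles of $f$ stay off $X$. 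Hence $B:=L^{-1}AL$ satisfies $\|\rho_B\|_{cb}\le 1$, that is, $X$ is a complete spectral set for $B$.

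Next I would apply Arveson's dilation theorem (as recorded in \cite[Corollary 7.8]{paul}). Regarding $\Rat(X)$ as a unital operator algebra inside $C(\partial X)$, whose sup-norm is taken over $\partial X$ by the maximum principle, the complete contractivity of $\rho_B$ guarantees that it dilates to a unital $*$-representation on a larger Hilbert space $\mathcal H\supset H$. Evaluating this representation at the coordinate function yields a normal operator $N$ on $\mathcal H$ with $\sigma(N)\subset\partial X$ and $f(B)=P_Hf(N)\!\mid_H$ for every rational $f$ with poles off $X$; thus $B=L^{-1}AL$ has a normal $\partial X$-dilation. Combining with $\|L\|\cdot\|L^{-1}\|\le K\le n+n(n\!-\!1)/\sqrt3$ gives the stated conclusion.

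As for difficulty, there is essentially no obstacle beyond careful bookkeeping, since all the hard analytic work is already contained in Theorem~\ref{thm0}. The three points that must be stated but are routine are: that complete $K$-spectrality is exactly the complete $K$-boundedness of $\rho_A$; that conjugation by $L$ intertwines with the rational functional calculus, so that the conjugated map is again $\rho_B$ with $B=L^{-1}AL$; and that the Shilov boundary of $\Rat(X)$ is $\partial X$, which is what forces $\sigma(N)\subset\partial X$ rather than merely $\sigma(N)\subset X$ in the dilation. All three are standard and are precisely the facts invoked in \cite[Corollary 7.8, Theorem 9.1]{paul}.
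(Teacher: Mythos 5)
Your proposal is correct and follows exactly the route the paper intends: the corollary is obtained by combining the complete $K$-spectrality from Theorem~\ref{thm0} with Paulsen's similarity theorem and Arveson's dilation theorem, precisely the facts the paper cites from \cite[Corollary 7.8, Theorem 9.1]{paul} in the paragraph preceding the statement. The paper gives no separate proof beyond this citation, so your write-up simply makes explicit the same standard argument.
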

If $\|A\| \le R$ and $\|A^{-1}\| \le R$, and $X=X(R)$ is an annulus, we can apply this corollary (with $n=2$) and obtain a $\partial X(R)$-dilation for a similarity. We would also like to mention the
following deep result due to Agler \cite{agler}: if an annulus $X$ is a
spectral set for $A$, then it is a complete spectral set for $A$,
that is, $A$ has a normal $\partial X$-dilation. However, the
analogue of Agler's result
is not true for triply
connected domains (see \cite{AHR,DrMc, Pic}).

\medskip
\noindent {\bf Outline of the proof and organization of the paper.}
Let $A\in \mathcal L(H)$, and consider the
intersection $X=D_1\cap D_2\cap\dots\cap D_n$ of $n$ disks of the
Riemann sphere $\overline \C$, each of them being spectral for
$A$.

\medskip

\noindent {\it Convention.} In what follows we will always suppose that for each spectral set $D_j$, the spectrum $\sigma(A)$ of $A$ is included in the interior of $D_j$. The
general case will then follow by a limit argument by slightly
enlarging the disks. Note that each superset of a spectral set is spectral.

\medskip

\noindent {\it Decomposition of the Cauchy kernel.} Consider a disk $D$ among $D_1, \cdots, D_n$, which is centered in $\omega\in \mathbb C$
(if $D$ is a half-plane we take $\omega=\infty$). We chose an
arclength parametrization $s \mapsto \sigma=\sigma(s)\in \partial
D$ of the boundary of $D$ with orientation such that
$\frac{1}{i}\,\frac{d\sigma}{ds}$ is the outward normal to
$D$. Let $A\in \LH$ be a bounded operator with $\sigma(A)\subset$
int$(D)$. For $\sigma\in
\partial D$, we consider the following Poisson kernel
\begin{equation}\label{def_mu}
\mu(\sigma,A,D) = \tfrac{1}{2\pi i}\big(
(\sigma\!-\!A)^{-1}\tfrac{d\sigma}{ds}-(\bar\sigma\!-\!A^*)^{-1}\tfrac{d\bar\sigma}{ds}
-\tfrac{1}{\sigma\!-\!\omega}\tfrac{d\sigma}{ds}\big).
\end{equation}
Notice that in case $\omega=\infty$ of a half-plane, the term involving
$\omega$ on the right-hand side of (\ref{def_mu}) vanishes.

The first important step in the proof is the decomposition of the Cauchy kernel
$$
 \frac{1}{2\pi i}(\sigma-A)^{-1}d\sigma=\mu(\sigma,A,D)\,ds+\nu(\sigma,A,D)\,d\sigma
$$
as the sum of the Poisson kernel and a residual kernel.
\medskip

\noindent {\it Decomposition of $f(A)$.} For a rational function $f\in \Rat(X)$, the above decomposition of the Cauchy kernel leads to a decomposition of $f(A)$ as
\begin{equation}\label{decomposition}
 f(A)=g_{p}(f) +g_{r}(f),
\end{equation}
with $$ g_{p}(f) =\sum_{j=1}^n \int_{X\cap \partial
D_{j}}f(\sigma)\,\mu(\sigma,A,D_{j})\,ds,\quad g_{r}(f)
=\sum_{j=1}^n \int_{X\cap  \partial
D_j}f(\sigma)\,\nu(\sigma,A,D_{j})\,d\sigma. $$ Here $p$ stands
for ''Poisson`` and $r$ for ''residual``. This decomposition
reduces to the one used before in
  \cite{bbc,crde,becr2} in the special case $\omega=\infty$ of a
  half-plane.
 The proof will
consist in showing that the complete bounded norm of the map
$f\mapsto g_{p}(f)$ is bounded by $n$, while the complete bounded
norm of the map $f\mapsto g_{r}(f)$ can be estimated by
$n(n\!-\!1)/\sqrt3$.

\medskip

Two basic lemmas on operator-valued integrals are proved in
Section \ref{s2.2} and, as application, the Poisson term is
estimated. In order to make clearer the basic ideas of our
reasoning for the control of the residual term, we start with the
annulus case. A proof for Theorem~\ref{thm1} (implying
Theorem~\ref{thm0} for an annulus) is presented in Section
\ref{s2.1}. A proof of Theorem~\ref{thm0} for the case $n=2$ is
presented in Section \ref{s2.5}. For the residual term, the
decisive step is the invariance under M\"obius maps
    (also called fractional linear transformations or
    homographic transformations)
    of our representation formula. We also use the fact that the
variable $\overline \sigma$ which appears in $g_r(f)$ can be
expressed in terms of $\sigma$ due to the particular form of
$\partial X$. Subsequently, a new path of integration is used in
order to monitor the complete bounded norm of the residual term.
The new path of integration will be the circle of radius $1$ in
case of the annulus $\{ R^{-1} \leq | z | \leq R \}$, and the
positive real line in case of the sector $\{ |\arg(z)|\leq \theta
\}$ for $\theta\in (0,\pi/2)$. We call these median lines.

Our proof of Theorem~\ref{thm0} for $n>2$, presented in Section
\ref{s3}, is obtained by constructing a Voronoi-like tesselation
of the Riemann sphere based on the reciprocal of the infinitesimal
Carath\'eodory pseudodistance (see \cite{JaPf}),
  sometimes also called infinitesimal
Carath\'eodory metric (see, e.g., \cite{simha}). Here the new
paths of integration (the median lines) are obtained using
suitable edges of this tesselation, which requires some
combinatorial considerations. Then the arguments used for the case
of two disks will allow us to conclude.

\section{Two basic lemmas}\label{s2.2}
The following two basic
lemmas, formulated in a slightly more general setting, are obtained using positivity in a crucial way.
\begin{lemma}\label{pp}
We consider a measurable subset $E$ of $\C$, a complex-valued
measure $m$ on $E$, and a bounded and continuous function
$M(.)\in C(E;\LH)$ defined on $E$ such that the operator $M(\sigma)$ is self-adjoint and positive for each $\sigma\in E$.
We also assume that there exist a positive bounded measure $dn$ with $|dm|\leq
dn$ on $E$.
Then the map
\begin{equation*}
r\mapsto g_1(r), \quad g_1(r)=\int_{E}r(\sigma) \,M(\sigma)\,dm(\sigma)
\end{equation*}
 is completely bounded from the
algebra ${\mathcal R}(E)$ into the algebra $\LH$, with complete
bound $\|g_1\|_{cb}\leq
\big\|\int_{E}M(\sigma)\,dn(\sigma)\big\|$.
\end{lemma}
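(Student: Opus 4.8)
The plan is to estimate the complete bounded norm directly, by testing the relevant operator matrix against vectors. Fix an integer $k$ and a matrix $(r_{ij})\in M_k(\Rat(E))$; I must bound the operator norm of $(g_1(r_{ij}))$ acting on $H^k$, the direct sum of $k$ copies of $H$. For unit vectors $x=(x_1,\dots,x_k)$ and $y=(y_1,\dots,y_k)$ in $H^k$, I would write
\begin{equation*}
\langle (g_1(r_{ij}))\,x,y\rangle = \sum_{i,j}\langle g_1(r_{ij})x_j,y_i\rangle = \int_E \sum_{i,j} r_{ij}(\sigma)\,\langle M(\sigma)x_j,y_i\rangle\,dm(\sigma),
\end{equation*}
so that everything reduces to a pointwise estimate of the integrand followed by an integration against $|dm|$.

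The crucial use of positivity enters here: since each $M(\sigma)$ is self-adjoint and positive, it admits a positive square root $M(\sigma)^{1/2}$, and I set $u_j(\sigma)=M(\sigma)^{1/2}x_j$ and $v_i(\sigma)=M(\sigma)^{1/2}y_i$. Then $\langle M(\sigma)x_j,y_i\rangle=\langle u_j(\sigma),v_i(\sigma)\rangle$, and the integrand becomes $\langle (r_{ij}(\sigma))\,u(\sigma),v(\sigma)\rangle$, where $(r_{ij}(\sigma))$ denotes the scalar $k\times k$ matrix acting on $H^k$ componentwise and $u(\sigma),v(\sigma)\in H^k$ are assembled from the $u_j$ and $v_i$. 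Bounding this by the matrix operator norm gives, for every $\sigma\in E$,
\begin{equation*}
\big|\langle (r_{ij}(\sigma))\,u(\sigma),v(\sigma)\rangle\big|\le \|(r_{ij}(\sigma))\|\,\|u(\sigma)\|_{H^k}\,\|v(\sigma)\|_{H^k}\le \|(r_{ij})\|_E\,\|u(\sigma)\|_{H^k}\,\|v(\sigma)\|_{H^k}.
\end{equation*}

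Next I would use $|dm|\le dn$ together with the Cauchy--Schwarz inequality in $L^2(E,dn)$ to obtain
\begin{equation*}
\big|\langle (g_1(r_{ij}))\,x,y\rangle\big|\le \|(r_{ij})\|_E \Big(\int_E\|u(\sigma)\|_{H^k}^2\,dn\Big)^{1/2}\Big(\int_E\|v(\sigma)\|_{H^k}^2\,dn\Big)^{1/2}.
\end{equation*}
Writing $\mathcal M=\int_E M(\sigma)\,dn(\sigma)$, a positive operator on $H$, the first factor equals $\sum_j\langle \mathcal M x_j,x_j\rangle\le\|\mathcal M\|\,\|x\|_{H^k}^2\le\|\mathcal M\|$, since $\|x\|\le 1$, and likewise for the second factor. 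Combining these yields $|\langle (g_1(r_{ij}))\,x,y\rangle|\le\|\mathcal M\|\,\|(r_{ij})\|_E$; taking the supremum over unit vectors $x,y$ and over all $k$ gives $\|g_1\|_{cb}\le\|\mathcal M\|$, which is the asserted bound.

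The only point requiring care — and the step I would flag as the main technical obstacle, though a minor one — is the measurability and integrability underlying these manipulations. Since $M(\cdot)$ is bounded and continuous and the operator square root is continuous in operator norm on the positive cone, the maps $\sigma\mapsto u_j(\sigma),v_i(\sigma)$ are continuous and bounded; as $dn$ is a finite positive measure, all integrals converge and the weak operator integral defining $\mathcal M$ makes sense. Granting this, the chain of inequalities above is exactly the desired complete bound.
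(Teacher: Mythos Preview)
Your proof is correct and is precisely the argument the paper has in mind when it says that the proof of this lemma is ``easier and similar'' to that of Lemma~\ref{pm}, which it gives in full: factor $M(\sigma)=M(\sigma)^{1/2}M(\sigma)^{1/2}$, estimate the scalar matrix pointwise, and control the remaining integrals via the positivity of $\int_E M(\sigma)\,dn(\sigma)$. The only cosmetic difference is that you invoke Cauchy--Schwarz in $L^2(E,dn)$ where the paper (for Lemma~\ref{pm}) uses the arithmetic--geometric mean inequality $ab\le\tfrac12(a^2+b^2)$; both lead to the same bound.
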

\begin{lemma}\label{pm}
We consider a measurable subset $E$ of $\C$, a complex-valued
measure $m$ on $E$, and a  bounded and continuous function
$M(.)\in C(E;\LH)$ defined on $E$ with bounded invertible operator values.
We assume that there exist a continuous function $N(.)\in
C(E;\LH)$ defined on $E$ with bounded self-adjoint operator
values, a positive number $\alpha$ and a positive bounded measure $dn$
such that
 $\frac12 (M(\sigma)+M(\sigma)^*)\geq N(\sigma)\geq\alpha>0$, for all $\sigma\in E$ and $|dm|\leq dn$ on $E$. Then the map
 \begin{equation*}
r\mapsto g_2(r), \quad g_2(r)=\int_{E}r(\sigma) \,(M(\sigma))^{-1}dm(\sigma)
\end{equation*}
 is well-defined and completely bounded from the algebra ${\mathcal R}(E)$
into the algebra $\LH$, with complete bound $\|g_2\|_{cb}\leq
\big\|\int_{E}(N(\sigma))^{-1} dn(\sigma)\big\|$.
\end{lemma}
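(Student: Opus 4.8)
The plan is to follow the proof of Lemma~\ref{pp}, replacing the positive square-root factorization $M=M^{1/2}M^{1/2}$ used there by a factorization routed through $N^{\pm1/2}$. Since $N(\sigma)$ is self-adjoint with $N(\sigma)\ge\alpha>0$, the positive operators $N(\sigma)^{1/2}$ and $N(\sigma)^{-1/2}$ are bounded and available. First I would check that $g_2$ is well-defined: the hypothesis gives $\tfrac12(M(\sigma)+M(\sigma)^*)\ge\alpha$, whence $\|M(\sigma)w\|\ge\Re\langle M(\sigma)w,w\rangle/\|w\|\ge\alpha\|w\|$ and $\|M(\sigma)^{-1}\|\le\alpha^{-1}$ uniformly on $E$; as $r\in\Rat(E)$ is bounded and $dn$ is finite with $|dm|\le dn$, the integral converges in operator norm.

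For the complete bound, fix $k\ge1$ and a matrix $r=(r_{ij})\in M_k(\Rat(E))$ with $\|r\|_E\le1$, and estimate the bilinear form of $G:=(g_2(r_{ij}))$ on $H^k=H\oplus\cdots\oplus H$. For $x=(x_j)$ and $y=(y_i)$ in $H^k$,
$$ \langle Gx,y\rangle=\int_E\sum_{i,j}r_{ij}(\sigma)\,\langle M(\sigma)^{-1}x_j,y_i\rangle\,dm(\sigma). $$
Inserting $N^{-1/2}N^{1/2}=I$ and using the self-adjointness of $N^{-1/2}$, I factor each scalar as $\langle M^{-1}x_j,y_i\rangle=\langle N^{1/2}M^{-1}x_j,\;N^{-1/2}y_i\rangle$. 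With $u_j(\sigma)=N(\sigma)^{1/2}M(\sigma)^{-1}x_j$ and $v_i(\sigma)=N(\sigma)^{-1/2}y_i$ the integrand becomes $\langle(r(\sigma)\otimes I_H)\,u(\sigma),v(\sigma)\rangle_{H^k}$, of modulus at most $\|(r_{ij}(\sigma))\|\,\|u(\sigma)\|\,\|v(\sigma)\|\le\|u(\sigma)\|\,\|v(\sigma)\|$, since $\|(r_{ij}(\sigma))\|\le\|r\|_E\le1$. Using $|dm|\le dn$ and Cauchy--Schwarz,
$$ |\langle Gx,y\rangle|\le\Big(\int_E\|u\|^2\,dn\Big)^{1/2}\Big(\int_E\|v\|^2\,dn\Big)^{1/2}. $$
The $v$-factor is immediate from $\|v_i\|^2=\langle N^{-1}y_i,y_i\rangle$, giving $\int_E\|v\|^2\,dn\le\big\|\int_E N^{-1}\,dn\big\|\,\|y\|^2$.

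The decisive step is the matching bound for the $u$-factor, and this is where the hypothesis $\tfrac12(M+M^*)\ge N$ enters. Since $\|u_j(\sigma)\|^2=\langle (M^{-1})^*N M^{-1}x_j,x_j\rangle$, it suffices to prove the pointwise operator inequality $(M(\sigma)^{-1})^*N(\sigma)M(\sigma)^{-1}\le N(\sigma)^{-1}$. Conjugating by $N^{1/2}$, this is equivalent to $C^*C\le I$ for $C:=N^{1/2}M^{-1}N^{1/2}$, i.e. to $\|C\|\le1$. I would obtain the latter from its inverse: $C^{-1}=N^{-1/2}MN^{-1/2}$ satisfies $\Re(C^{-1})=N^{-1/2}\big(\tfrac12(M+M^*)\big)N^{-1/2}\ge N^{-1/2}N\,N^{-1/2}=I$, and any operator $D$ with $\Re D\ge I$ is bounded below, since $\|Dw\|\ge\Re\langle Dw,w\rangle/\|w\|\ge\|w\|$, so $\|D^{-1}\|\le1$; applied to $D=C^{-1}$ this gives $\|C\|\le1$. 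Hence $\|u_j\|^2\le\langle N^{-1}x_j,x_j\rangle$ and $\int_E\|u\|^2\,dn\le\big\|\int_E N^{-1}\,dn\big\|\,\|x\|^2$. Combining the two factors yields $\|G\|\le\big\|\int_E N^{-1}\,dn\big\|$ for every $k$ and every contractive $r$, which is the asserted bound $\|g_2\|_{cb}\le\big\|\int_E N^{-1}\,dn\big\|$. I expect this contraction estimate to be the only genuine obstacle; the remainder parallels Lemma~\ref{pp} almost verbatim.
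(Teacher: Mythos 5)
Your proof is correct and follows essentially the same route as the paper's: both factor $M(\sigma)^{-1}$ through $N(\sigma)^{\pm 1/2}$, exploit that the middle factor is a contraction because its inverse has real part $\geq I$, and then bound the two resulting quadratic forms by $\bigl\|\int_E N(\sigma)^{-1}\,dn(\sigma)\bigr\|$. The only cosmetic differences are that the paper first reduces to $N=\frac12(M+M^*)$, so that the contraction appears as $(I+iD(\sigma))^{-1}$ with $D(\sigma)$ self-adjoint, and it uses the arithmetic--geometric mean inequality where you use Cauchy--Schwarz.
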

\begin{proof}
We only give the proof of Lemma \ref{pm}, the other being easier
and similar. We consider $R(\sigma)=\big(r_{ij}(\sigma)\big)\in
M_n(\C)$, where the components $r_{ij}(.)$ are rational functions
bounded on $E$ with values in $\C$. We associate to $R$, the
operator $G_2(R)$ on $H^n$ defined by $ G_2(R)_{ij}=g_2(r_{ij})$.
To say that $g_2(r)$ is completely bounded  with complete bound
$\|g_2\|_{cb}$ means that
\begin{equation*}
 \|G_2(R)\|_{H^n\to H^n}\leq \|g_2\|_{cb} \|R\|_{E},
\end{equation*}
for all such $R$ and all values of $n$.

Without loss of generality, we may assume that $N(\sigma)=\frac12
(M(\sigma)+M(\sigma)^*)$. Then we define $B(\sigma)
=N(\sigma)^{-1/2}$, so that we can write
$M(\sigma)=B(\sigma)^{-1}(I\!+\!iD(\sigma))B(\sigma)^{-1}$, with a
self-adjoint operator $D(\sigma)$. Let us consider $u$ and $v\in
H^n$, with $\|u\|=\|v\|=1$, then we have
\begin{align*}
 \langle G_2(R)u,v\rangle&=\int_{E}\langle (R(\sigma) \otimes M(\sigma)^{-1})u,v\rangle\,dm(\sigma)\\
 &=\int_{E}\langle R(\sigma) \otimes(I\!+\!iD(\sigma))^{-1}B(\sigma)u,B(\sigma)v\rangle\,dm(\sigma).
\end{align*}
We note that $\|(I\!+\!iD(\sigma))^{-1}\|\leq1$, whence
\begin{equation*}
 \|R(\sigma) \otimes(I\!+\!iD(\sigma))^{-1}\|_{H^n\to H^n}\leq  \|R\|_E.
\end{equation*}

This yields
\begin{align*}
 |\langle G_2(R)u,v\rangle|&\leq\|R\|_E \int_{E}\|B(\sigma)u\|\,\|B(\sigma)v\|\,dn(\sigma)\\
 &\leq\frac12\|R\|_E \Big(\int_{E}\|B(\sigma)u\|^2dn(\sigma)+\int_{E}\|B(\sigma)v\|^2dn(\sigma)\Big).
\end{align*}
But we have
\begin{align*}
\int_{E}\|B(\sigma)u\|^2dn(\sigma)&=\Big\langle\int_{E}B^2(\sigma)\,dn(\sigma)u,u\Big\rangle
\leq\Big\|\int_{E}(N(\sigma))^{-1}\,dn(\sigma)\Big\|,
\end{align*}
and the same inequality with $v$ in place of $u$. Finally we have
obtained
\begin{align*}
 |\langle G_2(R)u,v\rangle|&\leq\|R\|_E \Big\|\int_{E}(N(\sigma))^{-1}\,dn(\sigma)\Big\|,
 \end{align*}
for all $u$, $v$, with $\|u\|=\|v\|=1$, which completes the proof
of the lemma.
\end{proof}

As an application of Lemma~\ref{pp}, we will prove an estimate for the Poisson term defined in the introduction.

\begin{corollary} \label{decadix}
(a) \quad Let the closed disk $D$ of the Riemann sphere be a spectral set
for $A\in \LH$, and let $\Gamma\subset \partial D$. With the above notation, the map
\begin{equation*}
f\mapsto g(f):=\int_{\Gamma} f(\sigma) \mu(\sigma,A,D)\,ds
\end{equation*}
is completely bounded from the algebra ${\mathcal R}(D)$ into
$\LH$, with complete bound $\leq 1$.

(b) \quad  Let $A\in \mathcal L(H)$, and consider the
intersection $X=D_1\cap D_2\cap\dots\cap D_n$ of $n$ disks of the
Riemann sphere $\overline \C$, each of them being spectral for
$A$. With the above notation, the map
$$f\mapsto g_{p}(f) =\sum_{j=1}^n \int_{X\cap \partial D_{j}}f(\sigma)\,\mu(\sigma,A,D_{j})\,ds$$
is completely bounded from the algebra ${\mathcal R}(X)$ into
$\LH$, with complete bound $\leq n$.
\end{corollary}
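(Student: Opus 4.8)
The plan is to deduce Corollary~\ref{decadix} directly from Lemma~\ref{pp}, after establishing the single missing fact: that the Poisson kernel $\mu(\sigma,A,D)$ is a positive self-adjoint operator for each $\sigma\in\partial D$. Once that positivity is in hand, part (a) is an immediate application of Lemma~\ref{pp}, and part (b) follows from part (a) by the triangle inequality.

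\medskip

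\noindent\textbf{Step 1: Identifying the measure and the positive operator.} First I would put the integral defining $g(f)$ into the exact form covered by Lemma~\ref{pp}. Here the measurable set is $E=\Gamma\subset\partial D$, and I would choose $M(\sigma)=2\pi\,\mu(\sigma,A,D)$ together with the complex measure $dm=\frac{1}{2\pi}\,ds$ (so that $M(\sigma)\,dm=\mu(\sigma,A,D)\,ds$). Since $ds$ is arclength, $dm$ is already a positive measure, so one may simply take $dn=dm$ and the hypothesis $|dm|\le dn$ holds with equality. The remaining hypothesis of Lemma~\ref{pp} is that $M(\sigma)$ be self-adjoint and positive, which is the content of the next step.

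\medskip

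\noindent\textbf{Step 2: Positivity of the Poisson kernel (the main obstacle).} This is the crux. I expect to verify that
\[
\mu(\sigma,A,D)=\tfrac{1}{2\pi i}\Big((\sigma\!-\!A)^{-1}\tfrac{d\sigma}{ds}-(\bar\sigma\!-\!A^*)^{-1}\tfrac{d\bar\sigma}{ds}-\tfrac{1}{\sigma\!-\!\omega}\tfrac{d\sigma}{ds}\Big)\ge 0.
\]
Self-adjointness is visible: writing $\tau=\frac{1}{i}\frac{d\sigma}{ds}$ for the outward unit normal, the first two terms are of the form $T-T^*$ divided by $2\pi i$, hence self-adjoint, and the scalar correction term is real after the same manipulation. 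For positivity I would reduce to the model case of a disk (or half-plane) by a scalar computation: for a genuine disk $\{|z-\omega|\le r\}$ one parametrizes $\sigma=\omega+re^{i\theta}$, computes $\frac{d\sigma}{ds}=i e^{i\theta}$, and checks that the scalar analogue of $\mu$ (replacing $A$ by a point $z\in\mathrm{int}(D)$) is exactly the classical Poisson kernel for $D$, which is strictly positive. The operator statement $\mu(\sigma,A,D)\ge 0$ then follows because $D$ is a \emph{spectral set} for $A$: the assumption $\sigma(A)\subset\mathrm{int}(D)$ lets one represent $\mu(\sigma,A,D)$ through the positive scalar Poisson kernel integrated against a positive operator-valued measure associated to $A$ (equivalently, via von Neumann's characterization one reduces to the disk $\|A-\omega I\|\le r$ and uses that $(\sigma-A)^{-1}+(\bar\sigma-A^*)^{-1}$ minus the normalizing scalar is positive there). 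The half-plane case $\omega=\infty$ is handled identically, the $\omega$-term dropping out and positivity reducing to $\Re\langle Av,v\rangle\ge 0$.

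\medskip

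\noindent\textbf{Step 3: Applying Lemma~\ref{pp} and assembling (b).} With Steps~1 and~2 done, Lemma~\ref{pp} gives $\|g\|_{cb}\le\big\|\int_\Gamma M(\sigma)\,dn(\sigma)\big\|=\big\|\int_\Gamma\mu(\sigma,A,D)\,ds\big\|$. To conclude part (a) I would show this integral is bounded in norm by $1$; the natural way is to take $\Gamma=\partial D$ and $f\equiv 1$, noting that $\int_{\partial D}\mu(\sigma,A,D)\,ds=I$ is exactly the operator Poisson formula (the Cauchy-kernel decomposition stated in the introduction, with the residual part integrating to zero on the full boundary since $f\equiv 1$ is entire), whence $\big\|\int_\Gamma\mu\,ds\big\|\le\big\|\int_{\partial D}\mu\,ds\big\|=\|I\|=1$ by positivity and monotonicity of the integral of a positive operator-valued density. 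Finally, for part (b) I would apply part (a) to each disk $D_j$ with $\Gamma=X\cap\partial D_j$, obtaining $\big\|f\mapsto\int_{X\cap\partial D_j}f\,\mu(\cdot,A,D_j)\,ds\big\|_{cb}\le 1$; summing over $j=1,\dots,n$ and using subadditivity of the $cb$-norm yields $\|g_p\|_{cb}\le n$. Here one uses that each $\mathcal R(X)$-function restricts to an $\mathcal R(D_j)$-function so that part (a) applies, which is where the hypothesis that $X$ is the intersection of the $D_j$ enters.
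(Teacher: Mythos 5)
Your overall strategy is the paper's: run everything through Lemma~\ref{pp} with $E=\Gamma$ and $M=\mu(\cdot,A,D)$, reduce to the two facts $\mu(\sigma,A,D)\geq 0$ and $\int_{\partial D}\mu(\sigma,A,D)\,ds=I$ (the latter from the Cauchy formula), and get (b) by summing termwise. The gap is in your Step~2, which you correctly identify as the crux but do not close. Your primary argument --- operator positivity follows because the scalar Poisson kernel is positive and can be ``integrated against a positive operator-valued measure associated to $A$'' --- does not work: a non-normal $A$ has no spectral measure, and the existence of a positive operator-valued measure on $\partial D$ whose Poisson integral reproduces the functional calculus is essentially the normal $\partial D$-dilation statement, which is not supplied by the hypotheses $\sigma(A)\subset\mathrm{int}(D)$ and ``$D$ spectral'' without invoking a dilation theorem (and even then one would still have to identify that measure with $\mu(\cdot,A,D)\,ds$). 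Scalar positivity does not transfer to the operator level by substitution. Your parenthetical alternative --- reduce via von Neumann's characterization of spectral disks --- is the right route and is what the paper does, but you stop at asserting that the relevant expression ``is positive there''; that assertion is the whole content of the step. What is actually needed is an explicit factorization: for instance, for $D=\{|z\!-\!\omega|\geq r\}$ spectral for $A$ one has $(A\!-\!\omega)(A^*\!-\!\bar\omega)-r^2\geq 0$, and with $\sigma=\omega+re^{-is/r}$ a direct computation gives
\begin{equation*}
\mu(\sigma,A,D)=\tfrac{1}{2\pi}\,(r\!-\!e^{is/r}(A\!-\!\omega))^{-1}\big((A\!-\!\omega)(A^*\!-\!\bar\omega)-r^2\big)(r\!-\!e^{-is/r}(A^*\!-\!\bar\omega))^{-1}\geq 0,
\end{equation*}
with analogous identities for compact disks and half-planes. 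Without this (or an equivalent) computation, (\ref{decadix1}) and hence the corollary remain unproved.

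A smaller slip occurs in your part (b): you justify applying part (a) by saying each $\Rat(X)$-function ``restricts to an $\Rat(D_j)$-function''. It does not --- $f\in\Rat(X)$ may have poles in $D_j\setminus X$, so $\|f\|_{D_j}$ can exceed $\|f\|_X$ or be infinite, and the bound of part (a) taken literally (against $\|f\|_{D_j}$) is not the one you need. The argument is saved by applying Lemma~\ref{pp} directly with $E=X\cap\partial D_j$, since the resulting bound only involves $\sup_{E}|f|\leq\|f\|_X$ together with $0\leq\int_E\mu\,ds\leq\int_{\partial D_j}\mu\,ds=I$; you should invoke the lemma on $E$ rather than route through the algebra $\Rat(D_j)$.
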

\begin{proof}
  For (a) we want to apply Lemma \ref{pp} with
  $E=\Gamma$, $M(\sigma)=\mu(\sigma,A,D)$, $dm=ds$ and $dn=ds$.
  Hence it is sufficient to verify that
  \begin{align}
      & \label{decadix1}
      \mu(\sigma,A,D)\geq0, \qquad\hbox{ for all $\sigma\in \partial
      D$,}
     \\ &\label{decadix2}
       \int_{\partial D}\mu(\sigma,A,D)\,ds=1,  \qquad\hbox{
          (identity on $H$).}
  \end{align}
  For a proof of (\ref{decadix1}), consider for instance
  a disk of the form $D=\{|z\!-\!\omega|\geq r\}$ (proofs for compact
  disks and half-planes are similar, we omit here the details).
  Since $D$ is a spectral set for $A$, we have
  $(A\!-\!\omega)(A^*\!-\!\bar\omega)-r^2\geq0$.
  Then with the parametrization $\sigma=\omega+re^{-is/r}$ we get
\begin{align*}
\mu(\sigma,A,D)=-\tfrac{1}{2\pi }\big(
r(r\!-\!e^{is/r}(A\!-\!\omega))^{-1}+r(r\!-\!e^{-is/r}(A^*\!-\!\bar\omega))^{-1}-1\big)
\qquad\\ =\tfrac{1}{2\pi
}(r\!-\!e^{is/r}(A\!-\!\omega))^{-1}\big((A\!-\!\omega)(A^*\!-\!\bar\omega)-r^2)\big)(r\!-\!e^{-is/r}(A^*\!-\!\bar\omega))^{-1},
\end{align*}
which shows that $\mu(\sigma,A,D)\geq0$. Finally, the relation
(\ref{decadix2}) easily follows from the Cauchy formula. The proof of (b) follows by applying part (a) to each term of the sum.
\end{proof}

\section{The annulus case: proof of Theorem~\ref{thm1}}\label{s2.1}

We start by establishing the upper bounds in Theorem~\ref{thm1},
implying that Theorem~\ref{thm0} holds for $n=2$ in the special
case of an annulus.

\begin{proof}[Proof of the upper bounds in Theorem~\ref{thm1}]
  We have by assumption that the disks
  \begin{align*}
      &
       D_1 = \{ z \in \mathbb C : |z|\leq R \}, \quad
       \mbox{and} \quad
       D_2 = \{ z \in \mathbb C : |z|\geq R^{-1} \} \cup \{ \infty\}
  \end{align*}
  are spectral for $A$, and $X=X(R)=D_1\cap D_2$.

  Taking into account the complete bound $n=2$ of the map $f\mapsto g_p(f)$, it suffices
  to show that  the map $f\mapsto g_r(f)$ is completely bounded by $(R\!+\!1)/\sqrt{R^2\!+\!R\!+\!1}$.
  Recall that
    $$
  g_r(f)= \int_{ \partial D_1} f(\sigma) \,
    \nu(\sigma,A,D_{1}) d\sigma+  \int_{ \partial D_2} f(\sigma) \,
    \nu(\sigma,A,D_{2}) d\sigma,
    $$
    and that the orientations of $\partial D_{1}$ and $\partial D_{2}$ are opposite.
    We have on $\partial D_{1}$ the identity $\bar \sigma=R^{2}/\sigma$, thus
     \begin{align*}
    \nu(\sigma,A,D_{1})\, d\sigma&=\frac{1}{2\pi i} \,(\bar\sigma-A^*)^{-1}d\bar\sigma+\sigma^{-1}d\sigma)\\
    &=\frac{1}{2\pi i} A^*(\sigma A^*\!-\!R^2)^{-1}d\sigma.
     \end{align*}
     Note that $(\sigma A^*\!-\!R^2)^{-1}$ is holomorphic
     in $\sigma$, for $|\sigma|\leq R$. Therefore we can
     deform the boundary $\partial D_1$ into the unit circle
     and get
     $$
 \int_{\partial D_1} f(\sigma) \,
    \nu(\sigma,A,D_{1}) d\sigma=\int_{|\sigma|=1} f(\sigma) \,
    \nu(\sigma,A,D_{1}) d\sigma.
    $$
    Similarly, with $\nu(\sigma,A,D_2)=\frac{1}{2\pi i} A^*(\sigma A^*\!-\!R^{-2})^{-1}$, and keeping for the unit circle the orientation
    of $\partial D_{1}$,
       $$
 \int_{\partial D_2} f(\sigma) \,
    \nu(\sigma,A,D_{2}) d\sigma=-\int_{|\sigma|=1} f(\sigma) \,
    \nu(\sigma,A,D_{2}) d\sigma
    $$
    and
       $$
  g_r(f)=\int_{|\sigma|=1} f(\sigma) \,
   ( \nu(\sigma,A,D_{1}) -\nu(\sigma,A,D_{2}) )d\sigma.
    $$
Setting
  $\sigma(\theta)=e^{i\theta}$, we get
  \begin{align} \label{nu_integral_annulus}
      g_r(f) =- \frac{R^2-R^{-2}}{2\pi} & \int_{0}^{2\pi} f(e^{i\theta})
      M(\theta,A^*)^{-1} d\theta,
      \\& \nonumber
      \mbox{where} \quad  M(\theta,A^*) := R^2+R^{-2} - e^{i\theta} A^* - e^{-i\theta}
      (A^{-1})^*.
  \end{align}

We write $A^*=UG$, with a unitary operator $U$ and a positive operator $G$. Then the assumption ``$D_1$ and
$D_2$ spectral sets for $A$'' reads $R^{-1}I\leq G\leq RI$. Setting
$\rho=\frac{R+R^{-1}}{2}$, we have
\begin{equation*}
\|G+G^{-1}-(\rho+1)I\|\leq\max_{R^{-1}\leq x\leq R}
|x\!+\!x^{-1}\!-\!\rho\!-\!1| = \rho-1.
\end{equation*}
This yields, for the self-adjoint part of $M(\theta,A^*)$,
\begin{align*}
\Re M(\theta,A^*)&=R^2+R^{-2}-(\rho\!+\!1)
\Re(e^{i\theta}U)+\Re(e^{i\theta}U(G\!+\!G^{-1}\!\!-\!\rho\!-\!1))\\
&\geq R^2+R^{-2}-(\rho\!+\!1) \Re(e^{i\theta}U)-\rho+1\geq
R^2+R^{-2}-2\rho>0.
\end{align*}
We then apply Lemma \ref{pm} with
\begin{align*}
N(\theta)= R^2+R^{-2}-(\rho\!+\!1) \Re(e^{i\theta}U)-\rho+1,
\quad dm= dn=\frac{R^2-R^{-2}}{2\pi} d\theta,\quad
\end{align*}
and get that $g_r$ is completely bounded with a complete bound
\begin{align*}
\|g_r\|_{cb}&\leq
\frac{R^2-R^{-2}}{2\pi}\Big\|\int_0^{2\pi}\big(R^2+R^{-2}-(\rho\!+\!1)
\Re(e^{i\theta}U)-\rho+1 \big)^{-1}d\theta\Big\|.
\end{align*}
Elementary calculations show that, if we set,
\begin{align*}
h(z)=\int_0^{2\pi}\frac{d\theta}{R^2+R^{-2}-\rho+1-(\rho\!+\!1)
(e^{i\theta} z\!+\!e^{-i\theta}z^{-1})/2} ,
\end{align*}
then we have
\begin{align*}
h(e^{i\varphi})&=
\frac{2\pi}{\sqrt{R^2+R^{-2}-R-R^{-1}}\,\sqrt{R^2+R^{-2}+2}}\\
&=
\frac{2\pi}{R^2-R^{-2}}\frac{R-R^{-1}}{\sqrt{R^2+R^{-2}-R-R^{-1}}}\\
&=\frac{2\pi}{R^2-R^{-2}}\
\frac{\sqrt{R}+\sqrt{R^{-1}}}{\sqrt{R+R^{-1}+1}}=h(1).
\end{align*}
This yields that $h(U)=h(1)$ and thus
\begin{align*}
\|g_{r}\|_{cb}\leq \frac{R^2-R^{-2}}{2\pi}
\|h(U)\|=\frac{\sqrt{R}+\sqrt{R^{-1}}}{\sqrt{R+R^{-1}+1}} =
\frac{R+1}{\sqrt{R^2+R+1}},
\end{align*}
as required for the upper bound claimed in Theorem~\ref{thm1}.
\end{proof}

\begin{proof}[Proof of the lower bound in Theorem~\ref{thm1}]
For $t \in \R$, let
   $$A(t) = \left(\begin{array}{cc}
            1 & t  \\
            0 & 1
        \end{array}\right) \
\quad\hbox {with inverse }
A(t)^{-1} =  A(-t)$$
acting on the Hilbert space $\C^2$. For $t = R \!-\! R^{-1}$ we have
$\|A(t)\| = \|A(t)^{-1}\| = R$ (compare with \cite[p.
152]{paul}).  For a rational function $f $ we have
\begin{eqnarray*}
    f(A(t)) =
\left(\begin{array}{cc}
            f(1) & t f'(1)  \nonumber \\
            0 & f(1)
        \end{array}\right),
\end{eqnarray*}
which implies $\|f(A(t))\|\geq t|f'(1)|$. This shows that $$
   K(R)\geq (R \!-\! R^{-1})\,
    \sup\{|f'(1)|/\|f\|_{X(R)}\,; f\in
    \Rat(X)\}. $$
It follows from the computation of the infinitesimal
Carath\'eodory
    pseudodistance
on the annulus by Simha \cite[Example~(5.3)]{simha} that
\begin{equation*}
\sup\{|f'(1)|/\|f\|_{X(R)}\,; f\in \Rat(X), f(1)=0\}
= \frac{2}{R} \prod_{n=1}^\infty \Bigl(\frac{1-R^{-8n}}{1-R^{4-8n}}
    \Bigr)^2.
\end{equation*}
Thus
\begin{align*}
   K(R)\geq \gamma(R)&:=
   2(1 \!-\! R^{-2}) \prod_{n=1}^\infty
   \Bigl(\frac{1-R^{-8n}}{1-R^{4-8n}}
    \Bigr)^2)\\
    &=\lim_{k\to \infty }\gamma_k(R), 
    \end{align*}
    with 
    \begin{align*}
      & \gamma_{k}(R):=\frac{2}{1+R^{-2}}\prod_{n=1}^k \Bigl(\frac{1-R^{-8n}}{1-R^{4-8n}}\,\frac{1-R^{-8n}}{1-R^{-4-8n}}
         \Bigr).
    \end{align*}
    For $k=1$ we have
    \begin{align*}
    \gamma_1(R)=\psi(R^2),\quad\hbox{with }\ \psi(t):=\frac{2t(1\!+\!t^2)^2}{(1\!+\!t)(1\!+\!t^2\!+\!t^4)}.
    \end{align*}
It is easily seen that $\psi'(t)>0$, which shows that $\gamma_1(\cdot)$ is an increasing function. We have  $\gamma_1(1)=4/3$, and $\gamma_1(\infty)=2$.
We deduce that $K(R)\geq \gamma_1(R)> \gamma_1(1)=4/3$.
Note also that, for each $R$ the sequence $\gamma_k(R)$ is increasing with $k$, with upper bound 2.
\end{proof}
   
\begin{rem}\label{rem_bound}
    We have
    \begin{align*}
     \lim_{R \to 1} \gamma(R)=\prod_{n=1}^\infty\frac{4n^2}{4n^2-1}= \frac{\pi}{2} , \qquad
              \lim_{R \to \infty} \gamma(R)= 2 .
    \end{align*}
Numerical computations show that $\gamma(R)$ increases with $R$, which leads to a better lower bound  $\gamma(R)\geq \gamma(1)=\pi/2$.    For the sake of comparison, we have drawn in
    Figure~\ref{figure1} the different lower and upper bounds for
    $K(R)$ as a function of $R$. Returning to the second question of
    Shields mentioned in the introduction, we see that an
    optimal constant independent of $R$ for our annulus $X(R)$
    has to lie between $2$ and $2\!+\!2/\sqrt3$.
\end{rem}

\begin{figure}[t]
 \centerline{\epsfig{file=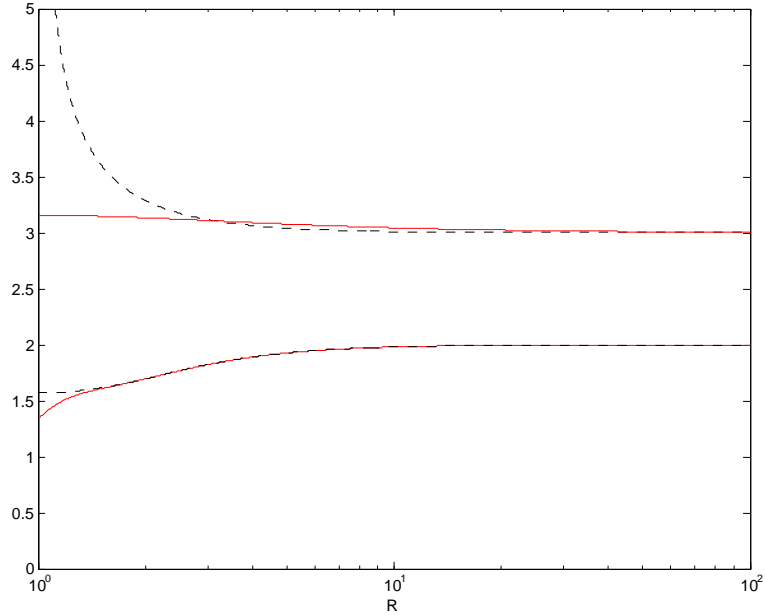,scale=.6}}
 \caption{{\it
 The different upper and lower bounds for $K(R)$ for an
 annulus $X(R)$ as a function of $R\in (1,+\infty)$:
 from above to below, we find the upper
 bound of Shields (dashed line, which is unbounded), the upper bound
 of Theorem~\ref{thm1} (solid line), and the lower bounds $\gamma_1(R)
 <\gamma(R)$ from the proof of Theorem~\ref{thm1}.
 }}\label{figure1}
\end{figure}

\begin{rem}\label{rem_bound2}
We owe to a suggestion of Vern Paulsen an improvement of upper bounds of $K(R)$ for large values of $R$. More precisely, adapting the arguments of \cite[Theorem\,4.1]{PauSin}, we obtain 
\begin{equation*}
K(R)\leq\max(3,2\!+\!\psi(R))\quad\hbox{with} \quad\psi(R)=\sum_{n\geq1}\frac{4}{1\!+\!R^{2n}}.
\end{equation*}
This shows that $K(R)\leq3$, for $R\geq 2.0952978$. This estimate is better than the upper bound
$2+(R\!+\!1)/\sqrt{R^2\!+\!R\!+\!1}$ of Theorem \ref{thm1},
if $R\geq1.9878813$,  and better than the upper bound of Shields if $R\geq1.85443$.
\begin{proof}
Let $f$ be a rational function bounded by 1 in the annulus $X(R)$. We write
\begin{equation*}
f(z)=f_1(z)\!+\!f_2(z)\quad\hbox{with} \quad f_1(z)=\sum_{n\geq0}a_nz^n,\  f_2(z)=\sum_{n<0}a_nz^n.
\end{equation*}
It suffices to show that $\|f_1\|_{D_1}+\|f_2\|_{D_2}\leq \max(3,2\!+\!\psi(R))$. For that we may assume, without loss of generality that $a_0\geq0$. For each integer $n>0$ , each real $r\in[1/R,R]$, and $\varphi\in \R$, we have
\begin{align*}
a_nr^ne^{i\varphi}&=-\frac{1}{2\pi}\int_0^{2\pi}(1-e^{i\varphi}f(re^{i\theta}))e^{-in\theta}d\theta,\\
\overline{a_{-n}}r^{-n}e^{-i\varphi}&=-\frac{1}{2\pi}\int_0^{2\pi}(1-e^{-i\varphi}\overline{f(re^{i\theta})})e^{-in\theta}d\theta,
\end{align*}
and
\begin{align*}
a_nr^ne^{i\varphi}+
\overline{a_{-n}}r^{-n}e^{-i\varphi}=-\frac{1}{\pi}\int_0^{2\pi}\Re(1-e^{i\varphi}f(re^{i\theta}))e^{-in\theta}d\theta.
\end{align*}
We note that $\Re(1-e^{i\varphi}f(re^{i\theta}))\geq0$, thus
\begin{align*}
|a_nr^ne^{i\varphi}+
\overline{a_{-n}}r^{-n}e^{-i\varphi}|\leq\frac{1}{\pi}\int_0^{2\pi}\Re(1-e^{i\varphi}f(re^{i\theta}))d\theta=2(1\!-\!\Re(e^{i\varphi}a_0)).
\end{align*}
With a judicious choice of $\varphi$ we obtain
\begin{align*}
|a_n|r^n+
|a_{-n}|r^{-n}\leq2(1\!-\!a_0)), \quad \hbox{for all  } r\in [1/R,R].
\end{align*}
Writing this inequality with the values $r=R$ and  $r=1/R$, and then adding, we deduce
\begin{align*}
(|a_n|\!+\!|a_{-n}|)(R^n\!+\!R^{-n})\leq4(1\!-\!a_0)).
\end{align*}
We note that, from the maximum principle,
\begin{align*}
\|f_1\|_{D_1}&=\max_{|z|=R}(|f(z)\!-\!f_2(z)|)\leq 1+\max_{|z|=R}(|f_2(z)|)\\
&\leq 1+\sum_{n\geq1}|a_{-n}|R^{-n}.
\end{align*}
Similarly we have
\begin{align*}
\|f_2\|_{D_2}&=\max_{|z|=1/R}(|f(z)\!-\!f_1(z)|)\leq 1+a_0+\sum_{n\geq1}|a_n|R^{-n},
\end{align*}
and finally
\begin{align*}
\|f_1\|_{D_1}+\|f_2\|_{D_2}&\leq 2+a_0+\sum_{n\geq1}(|a_n|\!+\!|a_{-n}|)R^{-n}\\
&\leq 2+a_0+4(1\!-\! a_0)\sum_{n\geq1}(1\!+\!R^{2n})^{-1}.
\end{align*}
The result follows by noticing that $a_0\in [0,1]$.
\end{proof}

\end{rem}

\section{The general case of two closed disks of the Riemann sphere}\label{s2.5}
Recall our basic assumption that for each spectral set $D$ for a given operator $A$, the spectrum $\sigma(A)$ is included in the interior of $D$.
Recall also that we have defined the residual kernel in a point $\sigma$ on the boundary of a disk $D$ centered in $\omega$ by
\begin{eqnarray*}
\nu(\sigma,A,D)\,d\sigma=\frac{1}{2\pi i}(\sigma\!-\!A)^{-1}d\sigma-\mu(\sigma,A,D)ds =
\frac{1}{2\pi i}(\bar\sigma\!-\!A^*)^{-1}d\bar\sigma+\frac{d\sigma}{\sigma-\omega}.
 \end{eqnarray*}
 If $\partial D=\{z\,;|z\!-\!\omega|=r\}$, then $\bar\sigma\!-\!\bar\omega=r^2/(\sigma\!-\!\omega)$, thus
 \begin{eqnarray*}
\nu(\sigma,A,D)&=&\frac{1}{2\pi i}\Big(\frac{1}{\sigma-\omega}-
\frac{r^2}{\sigma-\omega}(r^2-(\sigma\!-\!\omega)(A^*-\bar\omega))^{-1}\Big)\\
&=&\frac{1}{2\pi i}(A^*-\bar\omega)\big((\sigma\!-\!\omega)(A^*-\bar\omega)\!-\!r^2\big)^{-1}.
 \end{eqnarray*}
 Note that $\big((z\!-\!\omega)(A^*-\bar\omega)\!-\!r^2\big)^{-1}$ is also well-defined, and analytic in $z$, if $z\in D$, under the assumption $\sigma(A)\subset$ int $\!(D)$. This allows us to extend the previous definition of $\nu$ by
 \begin{eqnarray*}
\nu(z,A,D):=\frac{1}{2\pi i}(A^*-\bar\omega)\big((z\!-\!\omega)(A^*-\bar\omega)\!-\!r^2\big)^{-1},
\quad\hbox{for}\ z\in D.
 \end{eqnarray*}
 Similarly, if $D$ is a half-plane $D=\Pi=\{z\,; \Re\big(e^{-i\theta}(z\!-\!a)\bigr)\geq0\}$, we use from now on the definition
 \begin{eqnarray*}
\nu(z,A,\Pi):=\big(z\!-\!a\!+e^{2i\theta}\!(A^*-\bar a)\big)^{-1},
\quad\hbox{for}\ z\in \Pi.
 \end{eqnarray*}

\medskip

 An important point in our study is the homographic invariance of the differential form
 \begin{equation*}
\big(\nu(z,A,D_j)-\nu(z,A,D_k)\big)\,dz.
\end{equation*}
More precisely we have the following result
\begin{lemma}\label{homo}
Let $D_j$ and $D_k$ be two disks of $\overline\C$, and let $A$ be an operator with spectrum $\sigma(A)$ contained in the interior of $D_j\cap D_k$. Then, for all M\"obius map $\varphi$ with poles off $\sigma(A)$, we have
 \begin{equation*}
\big(\nu(z,A,D_j)-\nu(z,A,D_k)\big)\,dz=\big(\nu(\zeta,B,\Delta_j)-\nu(\zeta,B,\Delta_k)\big)\,d\zeta,
\end{equation*}
where $\zeta=\varphi(z)$, $B=\varphi(A)$, $\Delta_j=\varphi(D_j)$, and $d\zeta=\varphi'(z)\,dz$.
\end{lemma}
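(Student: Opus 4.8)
The plan is to recognise each residual kernel $\nu(z,A,D)$ as a Cauchy kernel whose pole is the reflection of $A$ across the boundary $\partial D$, and then to exploit the fact that, although a single Cauchy kernel is not M\"obius invariant, the \emph{difference} of two of them is.

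First I would put $\nu$ into reflected form. For a disk $D$ of centre $\omega$ and radius $r$, let $\widehat A:=\omega+r^2(A^*\!-\!\bar\omega)^{-1}$ denote the reflection of $A$ in $\partial D$; factoring $(A^*\!-\!\bar\omega)^{-1}$ to the right in the defining formula gives
\begin{equation*}
\nu(z,A,D)=\tfrac{1}{2\pi i}(A^*\!-\!\bar\omega)\bigl((z\!-\!\omega)(A^*\!-\!\bar\omega)\!-\!r^2\bigr)^{-1}=\tfrac{1}{2\pi i}(z\!-\!\widehat A)^{-1}.
\end{equation*}
The half-plane expression is the limiting case, with $\widehat A:=a-e^{2i\theta}(A^*\!-\!\bar a)$ the reflection across the boundary line; viewing a line as a circle through $\infty$, both situations are covered uniformly. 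Writing $\widehat A_j,\widehat A_k$ for the reflections of $A$ in $\partial D_j,\partial D_k$, the left-hand side of the lemma becomes $\tfrac{1}{2\pi i}\bigl((z\!-\!\widehat A_j)^{-1}-(z\!-\!\widehat A_k)^{-1}\bigr)\,dz$.

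The second step is to track the reflection under $\varphi$. Since $B=\varphi(A)$ one has $B^*=\psi(A^*)$, where $\psi$ is the M\"obius map obtained by conjugating the coefficients of $\varphi$; because the reflection is anti-holomorphic, this is exactly the ingredient needed. The classical Schwarz symmetry principle---reflection in a circle is conjugated by a M\"obius map into reflection in the image circle---is an identity between M\"obius transformations of the scalar variable, and as such it lifts through the rational functional calculus of the single operator $A^*$. This yields $\widehat B_j=\varphi(\widehat A_j)$ and $\widehat B_k=\varphi(\widehat A_k)$, where $\widehat B_j,\widehat B_k$ are the reflections of $B$ in $\partial\Delta_j,\partial\Delta_k$; hence $\nu(\zeta,B,\Delta_j)=\tfrac{1}{2\pi i}(\zeta\!-\!\varphi(\widehat A_j))^{-1}$, and similarly for $k$. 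I expect this to be the main obstacle: one must keep careful account of the adjoints (the appearance of $\psi$ rather than $\varphi$) and verify that the scalar reflection identity genuinely transfers to operators.

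Finally I would compute how a single Cauchy kernel transforms. Normalising $\varphi(z)=(az\!+\!b)/(cz\!+\!d)$ with $ad\!-\!bc=1$, the elementary operator identity $\varphi(z)-\varphi(\widehat A)=(z\!-\!\widehat A)(cz\!+\!d)^{-1}(c\widehat A\!+\!d)^{-1}$ together with $\varphi'(z)=(cz\!+\!d)^{-2}$ gives, after a one-line rearrangement,
\begin{equation*}
\bigl(\varphi(z)-\varphi(\widehat A)\bigr)^{-1}\varphi'(z)\,dz=(z\!-\!\widehat A)^{-1}dz-\frac{c}{cz\!+\!d}\,dz .
\end{equation*}
The correction $-\tfrac{c}{cz+d}\,dz$ is a scalar form in $z$ alone, independent of the pole $\widehat A$. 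Applying this with $\widehat A=\widehat A_j$ and with $\widehat A=\widehat A_k$, then subtracting, the two correction terms cancel, and using $d\zeta=\varphi'(z)\,dz$ we obtain
\begin{equation*}
\bigl(\nu(\zeta,B,\Delta_j)-\nu(\zeta,B,\Delta_k)\bigr)\,d\zeta=\tfrac{1}{2\pi i}\bigl((z\!-\!\widehat A_j)^{-1}-(z\!-\!\widehat A_k)^{-1}\bigr)\,dz=\bigl(\nu(z,A,D_j)-\nu(z,A,D_k)\bigr)\,dz,
\end{equation*}
which is the claimed invariance. The same cancellation makes transparent why an individual $\nu$ fails to be invariant.
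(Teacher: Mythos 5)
Your argument is correct, and it takes a genuinely different route from the paper's. The paper reduces to generators of the M\"obius group (translations, dilations, and the inversion $\varphi(z)=1/z$) and, for the inversion, computes directly with the formula $2\pi i\,\nu(z,A,D)=(A^*\!-\!\bar\omega)\bigl((z\!-\!\omega)(A^*\!-\!\bar\omega)\!-\!r^2\bigr)^{-1}$, arriving at $2\pi i\,\nu(\zeta,B,\Delta_j)=z-2\pi i\,z^2\,\nu(z,A,D_j)$, so that the $j$-independent term $z$ drops out of the difference. You instead treat an arbitrary $\varphi$ in one stroke by (i) reading $\nu(z,A,D)$ as $\tfrac{1}{2\pi i}(z-\widehat A)^{-1}$ with $\widehat A$ the reflection of $A$ in $\partial D$, (ii) transporting reflections via Schwarz symmetry, $\widehat B=\varphi(\widehat A)$, and (iii) noting that a M\"obius change of variable perturbs a Cauchy kernel by the pole-independent form $-\tfrac{c}{cz+d}\,dz$, which cancels in the difference; for $\varphi(z)=1/z$ your correction term $-\tfrac{1}{2\pi i}\tfrac{dz}{z}$ is exactly the paper's extra term. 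All three steps check out (the cocycle identity and the lifting of the scalar symmetry identity through the rational calculus of the single operator $A^*$ are both sound). Your version is more conceptual, dispenses with the generator reduction, treats bounded disks, exteriors and half-planes uniformly, and makes transparent why only the \emph{difference} of two residual kernels is invariant.

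One technical caveat worth recording: the paper's formula for $\nu$ never forms $\widehat A$, whereas your reflected form needs $(A^*\!-\!\bar\omega)^{-1}$. If $D_j$ is a bounded disk whose centre $\omega_j$ lies in $\sigma(A)$ (which the hypotheses allow), $\widehat A_j$ is not a bounded operator; likewise $\varphi(\widehat A_j)$ requires the pole of $\varphi$ to avoid $\sigma(\widehat A_j)$, a condition not literally among the hypotheses. This is presentational rather than substantive: every identity you use is a scalar rational identity in an auxiliary variable $w$ destined to be replaced by $A^*$, and the composite functions occurring in the final identity have their $w$-poles at conjugates of reflections of points of $D_j\cap D_k$ (resp.\ $\Delta_j\cap\Delta_k$), hence off $\sigma(A^*)$, so the end identity survives even when the intermediate operators do not individually exist. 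Either rephrase the argument in that form, or add the harmless assumption that the relevant centres avoid the spectrum and remove it afterwards by the same kind of limiting argument the paper already invokes for half-planes.
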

\begin{proof} For the sake of the clarity we
will assume in the proof that $D_j$, $D_k$,
   $\Delta_j$ and $\Delta_k$ are not half-planes. The case of half-planes could be obtained either by a limiting argument, by replacing half-planes by suitable supersets being unbounded disks, or otherwise directly by slightly changing the arguments below. We omit the (elementary but tedious) details.
   
Recall that a M\"obius map of the Riemann sphere is an
automorphism $\varphi$ of $\overline \C$ which has the form
\begin{align*}
\varphi(z)=\frac{az+b}{cz+d}, \quad \hbox{with}\quad ad\neq bc,\  a,b,c,d\in \C.
\end{align*}
These transformations form a group generated by the translations:
$\varphi(z)=z+b$, the dilations centered in 0: $\varphi(z)=\lambda z$, $\lambda\neq 0$, and the
inversion-symmetry: $\varphi(z)=1/z$.
The lemma is easily verified if $\varphi$ is a translation or a dilation, therefore it suffices  to consider the case $\varphi(z)=1/z$.\\
 So, if $\omega_j$ and $r_j$ denote respectively the center and
   the radius of $D_j$, after the transformation $\varphi(z)=1/z$,
   we get a disk
   $\Delta_j=\varphi(D_{j})$ with center and radius
   \begin{equation*}
      o_j=
      \frac{\bar \omega_j}{|\omega_j|^2-r_j^2},
      \quad
       \rho_j=\frac{r_j}{|\omega_j|^2-r_j^2}.
   \end{equation*}
   Setting $\zeta=1/z$, $B=A^{-1}$,
   we have
   \begin{equation*}
      B^*\!-\!o_j=
      \frac{1}{|\omega_j|^2-r_j^2}\,
        ({A^{-1})}^*(|\omega_j|^2\!-\!r_j^2\!-\!\omega_jA^*),
    \end{equation*}
    and thus
   \begin{align*}
   (\zeta\!-\!o_j)(B^*\!-\!o_j)&=
   \frac{(A^{-1})^*}{z(|\omega_j|^2-r_j^2)^2}\,
    \big((|\omega_j|^2\!-\!r_j^2\!-\!\omega_jA^*)(|\omega_j|^2\!-\!r_j^2\!-\!\bar\omega_jz)
    -zr_j^2A^*\big)\\
    &=\frac {(A^{-1})^*}{z(|\omega_j|^2-r_j^2)}\,
    \big(|\omega_j|^2\!-\!r_j^2\!-\!\omega_jA^*+z(A^*\!-\!\bar\omega_j)\big),
 \end{align*}
 implying that
 \begin{align*}
    2\pi i\,\nu(\zeta,B,\Delta_j)
    &=z(|\omega_j|^2\!-\!r_j^2\!-\!\omega_jA^*)
    \big(|\omega_j|^2\!-\!r_j^2\!-\!\omega_jA^*+z(A^*\!-\!\bar\omega_j)\big)^{-1}
    \\
     &=z-z^2(A^*\!-\!\bar\omega_j)
    \big(|\omega_j|^2\!-\!r_j^2\!-\!\omega_jA^*+z(A^*\!-\!\bar\omega_j)\big)^{-1}\\
    &=z-2\pi i\, z^2  \nu(z,A,D_j).
 \end{align*}
 Using that $d\zeta=-z^{-2}dz$, we get
 \begin{equation*}
 \big(\nu(\zeta,B,\Delta_j) \!-\!
\nu(\zeta,B,\Delta_k)
 \big)\,d\zeta =
  \big(\nu(z,A,D_j)\!-\!\nu(z,A,D_k)\big)\,dz,
  \end{equation*}
  which completes the proof.
\end{proof}

We are now prepared to present our proof of Theorem~\ref{thm0} for
the case $n=2$ of the intersection $X=D_1\cap D_2$ of two
arbitrary closed disks of the Riemann sphere.

We warn again the reader that each disk is a
spectral set for the bounded operator $A$, and that the
spectrum of $A$ is interior to $X$. We have seen in
Corollary \ref{decadix} that the map $f\mapsto g_p(f)$ is
completely bounded, with constant $2$, it remains to
show that  the map $f\mapsto g_r(f)$ is completely bounded,
with constant $2/\sqrt3$. We have
\begin{equation}\label{residu}
g_r(f)=\int_{X\cap\partial D_1}f(\sigma)\,\nu(\sigma,A,D_1)\,d\sigma+
\int_{X\cap\partial D_2}f(\sigma)\,\nu(\sigma,A,D_2)\,d\sigma.
\end{equation}
Three cases may occur
\medskip

\noindent{ \bf Case 1.} $\partial D_1\cap \partial D_2=\emptyset$. Then there exist a M\"obius map $\varphi$ and a real $R>1$ such that
 \begin{align*}
  \Delta_1=\varphi(D_1)=\{z\in \C\,; |z|\leq R\}\quad\hbox{and}\quad
    \Delta_2=\varphi(D_2)=\{z\in \overline\C\,; |z|^{-1}\leq R\}.
 \end{align*}
 Note that the pole of the map $\varphi$, which is $\varphi^{-1}(\infty)$, is not in $D_1$ and thus does not belong to $\sigma(A)$.
 We introduce a median circle, $C_{12}:=\{z\in \overline\C\,; |\varphi(z)|=1\}$.

\medskip

  \noindent{ \bf Case 2.} $\partial D_1\cap \partial D_2=\{\alpha,\beta\}$, $\alpha\neq\beta$. Then there exist a M\"obius map $\varphi$ and a real $\theta\in(0,\pi/2)$ such that
   \begin{align*}
  \Delta_1&=\varphi(D_1)=\{z\in \C\,;\Re(e^{i\theta} z)\geq0\}\cup\{\infty\}\quad\hbox{and}\\
    \Delta_2&=\varphi(D_2)=\{z\in \C\,;\Re(e^{-i\theta} z)\geq0\}\cup\{\infty\}.
 \end{align*}
 The pole of the map $\varphi$ now is $\alpha$ or $\beta$ and does not belong to $\sigma(A)$.
 We introduce a median line, $C_{12}:=\{z\in \overline\C\,;\varphi(z)\in [0,\infty]\}$.

\medskip
  \noindent{ \bf Case 3.} $\partial D_1\cap \partial D_2=\{\alpha\}$. Then there exists a M\"obius map $\varphi$ such that
   \begin{align*}
  \Delta_1&=\varphi(D_1)=\{z\in \C\,;\Im z \leq 1\}\cup\{\infty\}\quad\hbox{and}\\
    \Delta_2&=\varphi(D_2)=\{z\in \C\,;\Im z\geq -1\}\cup\{\infty\}.
 \end{align*}
 The pole of $\varphi$ now is $\alpha$ which does not belong to $\sigma(A)$. We set
 $C_{12}:=\{z\in \overline\C\,;\varphi(z)\in \overline\R\}$.\medskip

In each case we can continuously deform, staying in the set $X$,
$X\cap\partial D_1$ onto $C_{12}$ as well as  $X\cap\partial D_2$
onto $C_{12}$.  We can choose $C_{12}$ with the same orientation
that $\partial D_1$, which is opposite to the orientation of
$\partial D_2$. Since $f(.)\nu(.,A,D_j)$ is holomorphic in $X$, we
deduce from \eqref{residu}
\begin{equation*}
g_r(f)=\int_{C_{12}}f(z)\,\big(\nu(z,A,D_1)-\nu(z,A,D_2)\big)\,dz.
\end{equation*}
Theorem \ref{thm0}, in the case $n=2$, then follows from the following lemma
\begin{lemma}\label{lem4}
Consider two distinct disks $D_1$ and $D_2$ of $\overline\C$, and
a bounded operator $A\in \LH$ such that $D_1$ and $D_2$ are
spectral for $A$. Then, for any subarc of the median line $\Gamma\subset C_{12}$, the map
\begin{align*}
      & f \mapsto g(f) = \int_\Gamma f(z)\,\big(\nu(z,A,D_1)-\nu(z,A,D_2)\big)\,dz,
\end{align*}
is completely bounded from the algebra $\Rat (D_1 \cap D_2)$ into
$\LH$, with complete bound $\leq 2/\sqrt 3$.
\end{lemma}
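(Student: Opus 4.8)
The plan is to use the homographic invariance of Lemma~\ref{homo} to normalize the pair $(D_1,D_2)$, and then in each of the three geometric cases to deform the contour onto the median line, compute $\nu(\cdot,A,D_1)-\nu(\cdot,A,D_2)$ explicitly there, and feed the result into Lemma~\ref{pm}. Writing $\zeta=\varphi(z)$, $B=\varphi(A)$ and $\Delta_j=\varphi(D_j)$, the form $(\nu(z,A,D_1)-\nu(z,A,D_2))\,dz$ is invariant and $\|f\|_{D_1\cap D_2}=\|f\circ\varphi^{-1}\|_{\Delta_1\cap\Delta_2}$, so the completely bounded norm of $g$ coincides with that of the analogous map for $B,\Delta_1,\Delta_2$ and the image of $\Gamma$ inside the canonical median line; it therefore suffices to work in canonical coordinates. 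In Case~1 this is exactly the annulus situation already treated in the proof of the upper bounds of Theorem~\ref{thm1}, where the contour becomes the unit circle and the estimate $(R+1)/\sqrt{R^2+R+1}\le 2/\sqrt3$ is obtained; so only the two half-plane cases remain.

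In the strip case (Case~3), with $\Delta_1=\{\Im z\le1\}$, $\Delta_2=\{\Im z\ge-1\}$ and median line $\mathbb R$, the explicit half-plane kernels give, after a resolvent computation,
\begin{equation*}
\nu(x,A,\Delta_1)-\nu(x,A,\Delta_2)=\tfrac{2}{\pi}\big((xI-A^*)^2+4I\big)^{-1},\qquad x\in\mathbb R .
\end{equation*}
Setting $M(x)=(xI-A^*)^2+4I$ and $dm=dn=\tfrac2\pi\,dx$, the self-adjoint part is $\Re M(x)=(xI-\Re A)^2+(4I-(\Im A)^2)$; since $\Delta_1,\Delta_2$ spectral for $A$ means precisely $\|\Im A\|\le1$, one has $4I-(\Im A)^2\ge 3I$, so $N(x)=(xI-\Re A)^2+3I\le\Re M(x)$ is admissible in Lemma~\ref{pm}. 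As $\Re A$ commutes with the scalar $3I$, the operator integral collapses to the scalar one, $\int_{\mathbb R}\big((xI-\Re A)^2+3I\big)^{-1}dx=\tfrac{\pi}{\sqrt3}I$, whence $\|g\|_{cb}\le\tfrac2\pi\cdot\tfrac{\pi}{\sqrt3}=\tfrac2{\sqrt3}$. In the sector case (Case~2), with $\Delta_1=\{\Re(e^{i\theta}z)\ge0\}$, $\Delta_2=\{\Re(e^{-i\theta}z)\ge0\}$ and median line $[0,\infty)$, the same kind of computation gives
\begin{equation*}
\nu(x,A,\Delta_1)-\nu(x,A,\Delta_2)=\tfrac{\sin2\theta}{\pi}\,M(x)^{-1},\quad M(x)=x^2(A^*)^{-1}+2x\cos2\theta\,I+A^* ,
\end{equation*}
with $\Re M(x)=x^2\Re(A^{-1})+2x\cos2\theta\,I+\Re A$; the hypotheses force $\Re A\ge0$ and, because $z\mapsto1/z$ interchanges $\Delta_1$ and $\Delta_2$, also $\Re(A^{-1})\ge0$, and together with the sectoriality of $A$ they ensure $\Re M(x)\ge\alpha I>0$ on $[0,\infty)$. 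One then applies Lemma~\ref{pm} and is reduced to estimating $\big\|\int_0^\infty N(x)^{-1}dx\big\|$ for a suitable $N\le\Re M$. In every case the restriction from the full median line $C_{12}$ to a subarc $\Gamma$ costs nothing: since $N(\cdot)^{-1}\ge0$ and $dn\ge0$, one has $\int_\Gamma N^{-1}dn\le\int_{C_{12}}N^{-1}dn$ in the operator order, so the full-line bound dominates.

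The main obstacle is the passage from the scalar to the operator estimate of $\big\|\int N(\sigma)^{-1}dn(\sigma)\big\|$ when the self-adjoint operators involved fail to commute. The strip is the lucky case, because the offending term $4I-(\Im A)^2$ may be replaced by the scalar $3I$, which commutes with $\Re A$; and the annulus is handled in Theorem~\ref{thm1} by the polar decomposition $A^*=UG$, arranging $N$ to depend only on the unitary factor $U$ so that its rotation-invariant integral is a constant multiple of the identity. The genuinely delicate case is the sector: there both $\Re A$ and $\Re(A^{-1})$ enter, together with the cross term $2x\cos2\theta\,I$ that becomes indefinite once $\theta>\pi/4$ — precisely the regime $\theta\to\pi/2$ in which the extremal scalar value $2/\sqrt3$ is approached by a thin sector with spectrum near its edge. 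Finding a lower bound $N\le\Re M$ that simultaneously decouples the noncommuting data and still yields the sharp constant $2/\sqrt3$, presumably via a sectorial analogue of the $A^*=UG$ device, is where I expect the proof to demand the most work.
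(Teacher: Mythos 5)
Your overall strategy is exactly the paper's: use the homographic invariance of Lemma~\ref{homo} to reduce to the annulus, sector and strip, compute the kernel on the median line, and invoke Lemma~\ref{pm}; the annulus and strip cases are handled correctly and essentially verbatim as in the paper (including the minorant $N(x)=(x-\Re A)^2+3$ and the scalar integral $\tfrac2\pi\int_{\R}((x-\lambda)^2+3)^{-1}dx=2/\sqrt3$, and the remark that restriction to a subarc $\Gamma$ only decreases the bound via $dm={\mathbf 1}_\Gamma\,dn$). But the sector case is left genuinely incomplete: you correctly identify $M(x)=A^*+2x\cos2\theta+x^2(A^*)^{-1}$ and the need for a minorant $N\le\Re M$ that decouples the noncommuting data, and then stop, flagging this as ``where the proof demands the most work.'' Since the sector is one of the three cases and arguably the essential one (it is where the extremal constant $2/\sqrt3$ is attained in the limit $\theta\to\pi/2$), this is a real gap, not a routine verification.

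The missing idea is the following. Spectrality of the two half-planes means $W(A)$ lies in the sector of half-angle $\tfrac\pi2-\theta$ about the positive real axis, so with $B=\tfrac12(A+A^*)\ge0$ one can write $A=B^{1/2}(I+iC)B^{1/2}$ with $C$ self-adjoint and $\|C\|\le 1/\tan\theta$. Then $\Re\,(I-iC)^{-1}\ge\inf_{|\lambda|\le 1/\tan\theta}(1+\lambda^2)^{-1}=\sin^2\theta$, hence $\Re\,(A^*)^{-1}\ge\sin^2\theta\,B^{-1}$ and
\begin{equation*}
\Re M(x)\;\ge\;N(x):=x^2\sin^2\theta\,B^{-1}+2x\cos2\theta+B ,
\end{equation*}
which is a function of the single self-adjoint operator $B$ (this is the ``sectorial analogue of the $A^*=UG$ device'' you were looking for). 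The scalar integral $h(z,\theta)=\tfrac{\sin2\theta}{\pi}\int_0^\infty\bigl(x^2\sin^2\theta/z+2x\cos2\theta+z\bigr)^{-1}dx$ is invariant under $x\mapsto xz$, so $h(z,\theta)=h(1,\theta)$ and the operator integral $\int_0^\infty N(x)^{-1}\,\tfrac{\sin2\theta}{\pi}dx=h(1,\theta)I$ collapses to a scalar; one concludes with $h(1,\theta)\le h(1,\pi/2)=2/\sqrt3$. (One also checks $N(x)\ge\alpha>0$ on $[0,\infty)$: the discriminant $\cos^2 2\theta-\sin^2\theta=\cos^2\theta\,(1-4\sin^2\theta)$ is negative for $\theta>\pi/6$, and for $\theta\le\pi/6$ the coefficient $\cos2\theta$ is positive, so the indefiniteness of the cross term for $\theta>\pi/4$ is harmless.) Without this step your argument establishes the lemma only for the annulus and strip configurations, i.e.\ only when $\partial D_1\cap\partial D_2$ has at most one point.
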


\begin{proof}
We use the M\"obius map
$\varphi$ previously described in the three possible cases and set $\zeta=\varphi(z)$, $h(\zeta)=f(z)$, $B=f(A)$.
Note that, since $\varphi$ is a bijection, the disks $\varphi(D_1)$ and $\varphi(D_2)$  are spectral for $B$, and the spectrum of $B$ is included in the interior of these disks. We get from Lemma \ref{homo}
\begin{align*}
   g(f) &= \int_\Gamma f(z)\,\big(\nu(z,A,D_1)-\nu(z,A,D_2)\big)\,dz\\
   &=\int_{\varphi(\Gamma)} h(\zeta)\,\big(\nu(\zeta,B,\Delta_1)-\nu(\zeta,B,\Delta_2)\big)\,d\zeta.
   \end{align*}
Therefore,  it suffices to show the lemma in the three following cases\,:
\medskip

\noindent $\bullet \, D_1 \cap D_2$ is an annulus $X(R)=\{z\in \C\,; R^{-1}\leq |z|\leq R\}$,

\noindent $\bullet \,
D_1 \cap D_2$ is a sector $X=\{z\in \C\,; \Re (e^{i\theta}z)\geq0$ and $\Re (e^{-i\theta}z)\geq0\}\cup\{\infty\}$,

\noindent $\bullet \,
D_1 \cap D_2$ is a strip $X=\{z\in \C\,; -1\leq\Im z\leq1\}\cup\{\infty\}$.

\medskip
The {\it annulus case} has been treated in Section \ref{s2.1},
except that the set $\{z\,; |z|=1\}$ is now replaced by a subset $\Gamma$.
The necessary modifications in the proof are rather minor, and so
will be omitted here.

\medskip

{\it The sector case.} We have $\Gamma\subset \R_+$ and
\begin{align*}
   g(f) &= \int_\Gamma f(x)\,\big(\nu(x,A,D_1)-\nu(x,A,D_2)\big)\,dx\\
   &=\frac{\sin 2\theta}{\pi}\int_0^\infty f(x) M(x)^{-1}{\mathbf 1}_{_{\Gamma}}(x)\,dx,
\end{align*}
with   $M(x)=A^*+2x\cos2\theta+x^2(A^*)^{-1}$.
The conditions $D_1$ and $D_2$ spectral sets for $A$ means that
$\langle Av,v\rangle\in D_1\cap D_2 \subset \{ z \in \C;
\Re(z)\geq 0 \}$ for all $v\in H$. Hence the operator
$B=\frac12(A\!+\!A^*)$ is a positive operator, and we have
$A=B^{1/2}(I\!+\!iC)B^{1/2}$, with a self-adjoint operator $C$
satisfying $\|C\|\leq1/\tan\theta$. Note that
 \begin{equation*}
\Re (I\!-\!iC)^{-1}\geq
\inf_{\lambda\in\sigma(C)}\Re\frac{1}{1-i\lambda}=
\inf_{\lambda\in\sigma(C)}\frac{1}{1+\lambda^2}\geq\sin^2\!\theta.
\end{equation*}
Hence we have $\Re (A^*)^{-1}\geq\sin^2\!\theta \,B^{-1}$ and
$\Re M(x)\geq x^2\sin^2\!\theta\, B^{-1}\!+2x\cos2\theta
+B$. We then apply Lemma \ref{pm} with
\begin{align*}
  & N(x)= x^2\sin^2\!\theta\, B^{-1}\!+2x\cos2\theta +B,
  \quad  dm={\mathbf 1}_{_{\Gamma}}dn,\quad dn=
\frac{\sin2\theta}{\pi} dx.
\end{align*}
Setting \begin{align*}
      & h(z,\theta) =
      \frac{\sin2\theta}{\pi}\int_0^{\infty}\frac{dx}{x^2\sin^2\!\theta/z+2x\cos2\theta+z}
\end{align*}
for $z \geq 0$, we observe that $h(z,\theta)=h(1,\theta)$ does not
depend on $z$, and Lemma \ref{pm} allows us to conclude that $g$
is completely bounded with a complete bound
\begin{align*}
\|g\|_{cb}&\leq \| h(B,\theta) \| = h(1,\theta)
\leq\max_{\theta'\in [0,\pi/2]} h(1,\theta')=h(1,\pi/2)=2/\sqrt{3}.
\end{align*}
We should notice that the estimate $h(1,\theta) \leq h(1,\pi/2)$,
though sufficient for the purpose of Theorem~\ref{thm0}, is quite
rough. Alternately, one may evaluate the integral expression for
$h(1,\theta)$. Also, our Corollary~\ref{decadix}\,(b) may be
improved for a sector. We refer the reader to \cite{bcd} and
\cite[Chapter~4]{becr2} for a discussion of optimized upper bounds
as a function of $\theta$.

{\it The strip case.} We now have $\Gamma\subset \R$ and
\begin{align*}
   g(f) &= \int_\Gamma f(x)\,\big(\nu(x,A,D_1)-\nu(x,A,D_2)\big)\,dx\\
   &=\frac{2}{\pi}\int_{-\infty}^{+\infty} f(x) M(x)^{-1}{\mathbf 1}_{_{\Gamma}}(x)\,dx,
\end{align*}
with   $M(x)=(x\!-\!A^*)^2+4$. Since $D_1$ and $D_2$ are
spectral sets for the operator $A$, we have
$|\Im\langle Av,v\rangle|\leq 1$ for all $v\in H$ with $\|v\|=1$. Hence, if we denote
$B=\frac12(A\!+\!A^*)$ and write $A=B\!+\!iC$, the self-adjoint operator $C$
satisfies $\|C\|\leq1$ and
 \begin{equation*}
\Re M(x)=(x\!-\!B)^2+4-C^2\geq N(x):=(x\!-\!B)^2+3.
\end{equation*}
We then deduce from Lemma \ref{pm} that $g$ is completely bounded with a
complete bound
\begin{align*}
\|g\|_{cb}&\leq \| h(B) \|=2/\sqrt{3},
\end{align*}
where
\begin{align*}
h(\lambda)=\frac{2}{\pi}\int_\R\frac{dx}{(x\!-\!\lambda)^2+3}=\frac{2}{\sqrt3}.
\end{align*}
\end{proof}

\begin{rem}\label{remm}
The key points in the proof of this lemma has been the homographic
invariance of the differential forms, and the choice of the median
lines as paths of integration.
 For our proof for more than two disks presented below, it
 will be useful to give a
 more abstract definition of median lines in terms of sets of
 equal "distance" to some parts of the boundary of $X$:
given a disk $D$ of the Riemann sphere, we introduce the following
notation
\begin{equation}\label{caratheodory2}
d(z,D)=\left\{\begin{array}{l} (|r^2-|z\!-\!\omega|^2|)/r,\, 
\hbox{ if }D\hbox{ is a disk with center }\omega\hbox{ and radius
} r,\\ \Re (e^{-i\theta}(z\!-\!a)),\, 
\hbox{ if }D\hbox{ is
the half-plane } \{z\,; \Re (e^{-i\theta}(z\!-\!a))\geq0\}.
\end{array}     \right.
\end{equation}
By means of elementary computations we have
 \begin{equation*}
     \frac{1}{d(z,D)}=
     \frac{|\phi'(z)|}{1-|\phi(z)|^2} , \quad \hbox{for}\ z\in D,
\end{equation*}
with the Riemann conformal map $\phi$ from $D$
 onto the unit disk $\mathbb D$. As a consequence, $1/d(z,D)$ is
 the infinitesimal Carath\'eodory pseudodistance.
 In particular, it becomes obvious that
 \begin{align}\label{caratheodory3}
      &
   d(\varphi(z),\varphi(D)) =|\varphi'(z)|\, d(z,D),
\end{align}
for any M\"obius map $\varphi$.

Note also that $d(z,D)\to 0$, if $z$ tends to the boundary of
$D$.\\ Now, if $D_j$ and $D_k$ are two disks with non empty
intersection, we introduce their median line $C_{jk}$
 \begin{equation*}
 C_{jk}:=\{z\in D_j\cap D_k\,; d(z,D_j)=d(z,D_k)\}.
\end{equation*}
From (\ref{caratheodory3}) it follows that, if $C_{jk}$ is the
median line of $D_j$ and $D_k$, and if $\varphi$ is a M\"obius
map, then $\varphi(C_{jk})$ is the median line of $\varphi(D_j)$
and $\varphi(D_k)$. Also, the circle $\{z\,; |z|=1\}$ is the
median line in the annulus case $X(R)$, the positive real semi
axis is the median line in the sector case, the real axis is the
median line in the strip case. This shows that the present
definition effectively coincides with the notion of median line
used in the beginning of this section with the curve
$C_{12}$.
\end{rem}

\section{Completing the proof of Theorem~\ref{thm0} }\label{s3}
We can now present the proof of the general case of Theorem~\ref{thm0}.
\begin{proof}[Proof of Theorem~\ref{thm0} for the case $n \geq 3$] We consider $n$ closed disks $D_1$, $D_2$,\dots,$D_n$ of the
Riemann sphere, with  $D_j\nsubseteq D_k$, for all $j\neq k$, such
that $X=D_1\cap D_2\cap...\cap D_n$ has a non-empty interior
(these two assumptions can be added without loss of generality in
the statement of Theorem~\ref{thm0}). Given $n$ points
$z_1,...,z_n$ in the complex plane, the classical Voronoi
tesselation of $X$ consists in dividing up the set $X$ in sets
$X_j$ of points $\sigma\in X$ such that $\sigma$ is at least as
close to $z_j$ as to any of the other points $z_k$ for $k\neq j$.
In order to find the new integration paths in our representation
formula for the residual term, we will construct a similar Voronoi
tesselation, but use the distance to the boundaries $\partial D_j$
instead of the distance to points. In our case, this distance will
be measured in terms of $d$, the reciprocal of the infinitesimal
Carath\'eodory pseudodistance (see Remark \ref{remm}). We recall
that $d(\sigma,D_j)\to 0$ if $\sigma\in X$ tends to $\partial
D_j$.

To be more precise, let \begin{align*}
      & X_j=\Bigl\{\sigma\in X : d(\sigma,D_j)=\min_{k=1,...,n}
      d(\sigma,D_k) \Bigr\}.
\end{align*}
By construction, the union $X_1\cup ... \cup X_n$ equals our set
$X$. We notice a difference between the present case and the $n=2$
case: the intersection $X_j \cap X_k$ for $j\neq k$ is a subset of
the median line $C_{jk}$ of the disks $D_j$ and $D_k$, which, in
general is a proper subset, since the median line $C_{jk}$ does
not need to be a subset of $X_j$, see
Figures~\ref{figure2}--\ref{figure4}. This situation does not
occur in the case $n=2$ of two disks.

\begin{figure}[t]
 \centerline{\epsfig{file=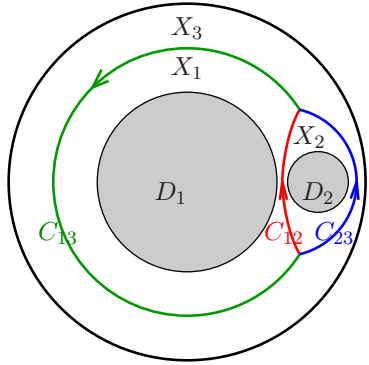,scale=.9}}
\caption{The tesselation for $D_1,D_3$
forming an annulus and $D_2$ exterior of a disk with isolated
boundary.}\label{figure2}
\end{figure}
\begin{figure}[t]
\centerline{\epsfig{file=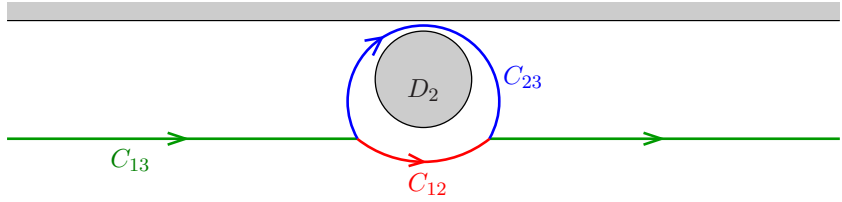,scale=.9}}
\caption{The
tesselation for $D_1,D_3$ half-planes with parallel boundaries and
$D_2$ exterior of a disk with with isolated
boundary.}\label{figure3}
\end{figure}
\begin{figure}[t]
 \centerline{\epsfig{file=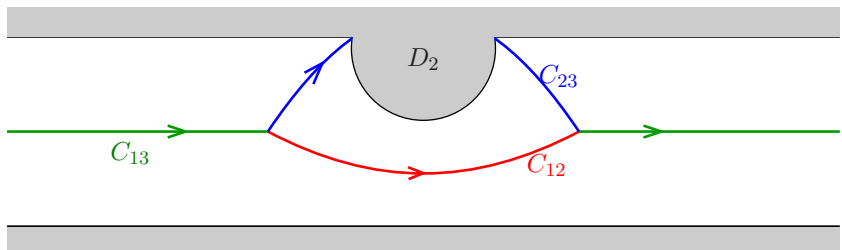,scale=.9}}
\caption{The
tesselation for $D_1,D_3$ half-planes with parallel boundaries and
$D_2$ exterior of a disk with boundary intersecting that of
$D_3$.}\label{figure4}
\end{figure}

In order to make the situation clearer in the general case $n\geq
3$, recall that the boundary of $X$ may be written as a union of
the sets $\partial D_j \cap X$ for $j=1,...,n$, where any two of
these sets have at most $2$ points in common (otherwise two of the
disks $D_j$ would have identical boundary, in contradiction to the
assumptions on the disks mentioned above). From the asymptotic
behavior of $d(\sigma,D_j)$ for $\sigma
\in X$ approaching $\partial D_j$ it follows that $\partial D_j
\cap X=\partial X \cap X_j$ for $j=1,...,n$, that is, $X_j$ can be
considered as a kind of closed neighborhood in $X$ of $\partial
D_j \cap X$.

It is now important to observe that, by construction, the boundary
of $X_j$ may be written as the union of $\partial D_j \cap X$ and
of $X_j \cap X_k$ for $k\neq j$ or, in other words, of the parts
of the median lines $C_{jk}$ for $k \neq j$ being a subset of
$X_j$. Again, any two of these pieces of $\partial X_j$ being all
circular arcs have at most two points in common, since a
non-trivial part of $\partial D_j$ cannot be a (part of a) median
line for disks $D_j$ and $D_k$ for $k\neq j$, and two medians
$C_{jk}$ and $C_{j\ell}$ only have more than two points in
common for $k,\ell \neq j$ if $D_k=D_\ell$.

Therefore, we have shown that, for a function analytic in $X_j
\subset X$, the path of integration $X \cap \partial D_j$ can be
replaced by the union of paths formed by the parts of the median
lines $C_{jk}$ for $k=1,...,n$, $k\neq j$, which have a non-empty
intersection with $X_j$ (and thus also with $X_k$). Since the
orientation of these median lines is changed if we permute $j$ and
$k$, we may conclude that, for any $f\in \Rat(X)$,
\begin{align*}
      g_r(f) &= f(A) -\sum_{j=1}^n \int_{X \cap \partial D_j} f(\sigma)
      \mu(\sigma,A,D_j) ds
      \\&= \sum_{j,k=1,j \neq k}^n
      \int_{C_{jk} \cap X_j \cap X_k} f(\sigma)
      \Bigl( \nu(\sigma,A,D_j) - \nu(\sigma,A,D_k) \Bigr) d\sigma.
\end{align*}
Thus Theorem~\ref{thm0} follows from Corollary~\ref{decadix} and
Lemma~\ref{lem4}.
\end{proof}

\bibliographystyle{amsplain}

\end{document}